\documentclass[11pt,a4paper,twoside]{article}

\usepackage{iftex}
\usepackage[T1]{fontenc}
\ifPDFTeX
  \usepackage[utf8]{inputenc}
\fi
\usepackage{lmodern}

\usepackage{amsmath,amsthm,amssymb,mathtools,esint}

\usepackage{geometry}
\usepackage{xcolor}
\usepackage{microtype}
\usepackage{fancyhdr}

\usepackage[
  colorlinks=true,
  linkcolor=blue!45!black,
  citecolor=blue!45!black,
  urlcolor=blue!45!black
]{hyperref}

\allowdisplaybreaks[3]
\newtheoremstyle{paperplain}{0.65em}{0.65em}{\itshape}{}%
  {\bfseries}{.}{0.5em}{}
\theoremstyle{paperplain}
\newtheorem{theorem}{Theorem}[section]
\newtheorem{proposition}[theorem]{Proposition}
\newtheorem{lemma}[theorem]{Lemma}
\newtheorem{corollary}[theorem]{Corollary}
\newtheoremstyle{paperremark}{0.65em}{0.65em}{\normalfont}{}%
  {\bfseries}{.}{0.5em}{}
\theoremstyle{paperremark}

\newcommand{\runningauthors}{XiaoRui Lu, Jin Yan, Bao Yu and Yang Zhou}
\newcommand{\runningtitle}{}
\let\originaltitle\title
\renewcommand{\title}[2][]{%
  \originaltitle{#2}%
  \if\relax\detokenize{#1}\relax
    \gdef\runningtitle{#2}%
  \else
    \gdef\runningtitle{#1}%
  \fi
}

\numberwithin{equation}{section}
\DeclareMathOperator{\tr}{tr}
\DeclareMathOperator{\dist}{dist}
\DeclareMathOperator{\diag}{diag}
\DeclareMathOperator{\supp}{supp}
\newcommand{\R}{\mathbb R}
\title[Sharp planar Monge--Amp\`ere estimates]{\textbf{Sharp Hessian integrability for the planar Monge--Amp\`ere equation}}
\author{\textbf{XiaoRui Lu}\and\textbf{Jin Yan}\and\textbf{Bao Yu}\and\textbf{Yang Zhou}}
\date{}

\makeatletter
\renewcommand{\@maketitle}{%
  \newpage\null
  \vskip .5em
  \begin{center}%
    {\LARGE\@title\par}%
    \vskip 1.2em
    {\large\lineskip .5em
      \begin{tabular}[t]{c}\@author\end{tabular}\par}%
  \end{center}%
  \par\vskip .7em
}
\newcommand{\printfirstpagebottommatter}{%
  \begingroup
  \renewcommand{\thefootnote}{}%
  \renewcommand{\@makefntext}[1]{\noindent##1}%
  \footnotetext[0]{%
    \textit{Keywords.}
    Monge--Amp\`ere equation; Hessian integrability;
    weak Lorentz estimate; adjoint equation; convexity.
    \par\smallskip
    \textit{2020 Mathematics Subject Classification.}
    35J96, 35B65.%
  }%
  \endgroup
}
\makeatother

\newcommand{\printauthorinformation}{%
  \par\bigskip
  \begingroup
  \small
  \setlength{\parindent}{0pt}%
  \textbf{XiaoRui Lu}\par
  School of Mathematical Sciences, Zhejiang University, Hangzhou 310058, China.\par
  \textit{Email:}
  \href{mailto:12435035@zju.edu.cn}{12435035@zju.edu.cn}.
  \par\medskip
  \textbf{Jin Yan}\par
  Institute of Mathematics,
  Academy of Mathematics and Systems Science,
  Chinese Academy of Sciences,
  55 Zhongguancun East Road, Beijing 100190, China.\par
  \textit{Email:}
  \href{mailto:yanjin@amss.ac.cn}{yanjin@amss.ac.cn}.
  \par\medskip
  \textbf{Bao Yu}\par
  School of Mathematical Sciences,
  University of Science and Technology of China,
  Hefei, Anhui Province, P. R. China.\par
  \textit{Email:}
  \href{mailto:baoyu1@mail.ustc.edu.cn}{baoyu1@mail.ustc.edu.cn}.
  \par\medskip
  \textbf{Yang Zhou}\par
  School of Mathematical Sciences,
  University of Science and Technology of China,
  Hefei, Anhui Province, P. R. China.\par
  \textit{Email:}
  \href{mailto:zy19700816@mail.ustc.edu.cn}{zy19700816@mail.ustc.edu.cn}.
  \par\medskip
  \endgroup
}

\hypersetup{%
  pdftitle={Sharp Hessian integrability for the planar Monge-Ampere equation},
  pdfauthor={XiaoRui Lu, Jin Yan, Bao Yu and Yang Zhou},
  pdfsubject={Optimal weak Hessian estimates for planar Alexandrov solutions},
  pdfkeywords={Monge-Ampere equation, Hessian integrability, Lorentz space, adjoint equation}
}

\begin{document}
\maketitle
\printfirstpagebottommatter

\begin{abstract}
We determine the optimal local integrability of the Hessian for planar
convex Alexandrov solutions satisfying
$$
 \lambda\,dx\leq\mu_u\leq\lambda^{-1}\,dx,
 \qquad 0<\lambda<1.
$$
The distributional Hessian satisfies the sharp
endpoint estimate
$$
 D^2u\in L_{\mathrm{loc}}^{P,\infty},
 \qquad
 P=\frac{1+\lambda}{1-\lambda}.
$$
This settles the weak
estimate proposed by De Philippis and Figalli and identifies the larger optimal exponent $P$. We further construct strictly convex $C^1$ solutions
for which the distribution function of the Laplacian is comparable to
$T^{-P}$ for all sufficiently large $T$. Hence both the exponent
and the weak Lorentz endpoint are optimal:
$L^{P,\infty}$ cannot be replaced by $L^{P,b}$ for any finite
$b>0$.

As applications, we obtain sharp estimates in Minty coordinates and
optimal quantitative bi-Lipschitz restrictions for gradient maps,
including an optimal exceptional-mass bound in quadratic-cost
transport.
\end{abstract}

\section{Introduction}
In this paper, we study convex solutions of the planar
Monge--Amp\`ere equation
\begin{equation}\label{eq:equation}
 \det D^2u=f\quad\text{in }\Omega\subset\R^2,
 \qquad 0<\lambda\leq f\leq\lambda^{-1}.
\end{equation}
The equation is understood in the Alexandrov sense: the measure
\(\mu_u\), defined by
\(\mu_u(E)=|\partial u(E)|\) for Borel sets \(E\), satisfies
\(\mu_u=f\,dx\). Here \(\partial u(E)\) is the union of the
subdifferentials over \(E\). Since the equation controls the product
of the Hessian eigenvalues, a central question is how much
integrability of the Hessian follows from the two bounds on \(f\)
alone.

For strictly convex Alexandrov solutions in arbitrary dimension,
Caffarelli \cite{CaffarelliLocalization,CaffarelliC1a} established
interior \(C^{1,\alpha}\) regularity under positive upper and lower
bounds on the density. If the density is continuous, his perturbation
theory \cite{CaffarelliW2p} further yields
\(W^{2,p}_{\mathrm{loc}}\) regularity for every finite \(p\).
For merely measurable densities bounded above and below by positive
constants, De Philippis and Figalli \cite{DFW21} proved
\[
 |D^2u|\in L\log^k L_{\mathrm{loc}}
 \qquad\text{for every finite }k,
\]
and hence \(u\in W^{2,1}_{\mathrm{loc}}\). De Philippis, Figalli and
Savin \cite{DFSW21} and, independently, Schmidt \cite{Schmidt}
subsequently obtained \(W^{2,1+\varepsilon}_{\mathrm{loc}}\)
regularity for some \(\varepsilon>0\) depending on the dimension and
the density bounds.

The role of strict convexity depends on the dimension. In dimensions
\(n\geq3\), a positive lower bound on the Monge--Amp\`ere measure does
not by itself imply strict convexity. For possibly non-strictly convex
solutions, Mooney \cite{MooneyPartial2015} proved partial regularity
and local \(W^{2,1}\) regularity, and later obtained quantitative
logarithmic estimates \cite{MooneyW212015}.

In dimension two, by contrast, the positive lower density bound
already implies strict convexity; see, for instance,
\cite{CaffarelliLocalization,Heinz}. Before the general
\(W^{2,1}\) theorem, Ambrosio, De Philippis and Kirchheim
\cite{ADK2011} related the planar Sobolev regularity problem to a
rigidity question for differential inclusions. De Philippis, Guerra
and Tione \cite{DGT2026} later used their unique-continuation theory
for differential inclusions to obtain another proof of planar
\(W^{2,1+\varepsilon}_{\mathrm{loc}}\) regularity through Minty's
correspondence and quasiregular mappings.

The available exponent must nevertheless depend on the density
contrast. For every \(p>1\), Wang \cite{WangCounterexamples}
constructed a two-dimensional example with
\[
 0<\lambda(p)\leq f\leq\lambda(p)^{-1}
\]
whose solution does not belong to \(W^{2,p}_{\mathrm{loc}}\).
The positive lower density bound is also essential: Mooney
\cite{MooneyDegenerate2016} constructed a planar convex Alexandrov
solution with bounded nonnegative density and zero boundary values
for which \(W^{2,1}\) regularity fails. Thus the existing planar
theory establishes higher integrability, but does not determine the
optimal exponent for a fixed positive density interval.

In dimension two, mixed gradient coordinates connect the
Monge--Amp\`ere equation with planar quasiconformal mappings.
For a smooth solution, De Philippis and Figalli
\cite[Section~5.2, problem~(2)]{DFSurvey} observed that the associated
mixed-coordinate map is \(1/\lambda^2\)-quasiconformal.
Combining this observation with the sharp planar quasiconformal
estimates of Astala \cite{Astala1994}, as presented in Astala,
Iwaniec and Martin \cite{AstalaIwaniecMartin}, De Philippis and
Figalli conjectured that the sharp endpoint should be
\begin{equation}\label{eq:figalli}
 D^2u\in L_{\mathrm{loc}}^{p_*,\infty},
 \qquad
 p_*=\frac{1+\lambda^2}{1-\lambda^2}.
\end{equation}
Here weak \(L^p\) means that
\[
 \sup_{T>0}T^p
 |\{|D^2u|>T\}\cap\Omega'|<\infty
\]
for every \(\Omega'\Subset\Omega\).

We prove a stronger estimate and determine the optimal exponent:
throughout the paper,
\begin{equation}\label{eq:exponent}
 0<\lambda<1,\qquad
 P=\frac{1+\lambda}{1-\lambda}.
\end{equation}

\begin{theorem}\label{thm:main}
Let \(\Omega\subset\R^2\) be convex and open, and let \(u\) be convex
on \(\Omega\). Suppose
\begin{equation}\label{eq:measure}
 \lambda\,dx\leq\mu_u\leq\lambda^{-1}\,dx.
\end{equation}
Then the distributional Hessian of \(u\) has no singular part and
\(D^2u\in L^{P,\infty}_{\mathrm{loc}}(\Omega)\).
More precisely, suppose also that
\[
 B_1\subset\Omega\subset B_2,\qquad
 u\in C(\overline\Omega),\qquad
 u=0\quad\text{on }\partial\Omega.
\]
For \(h=-\min_\Omega u\) and
\(S_\theta=\{u<-(1-\theta)h\}\), one has
\begin{equation}\label{eq:weak}
 \sup_{T>0}T^P|\{\Delta u>T\}\cap S_\theta|
 \leq C(\lambda,\theta),\qquad 0<\theta<1.
\end{equation}
In particular,
\(u\in W^{2,p}_{\mathrm{loc}}(\Omega)\) for every \(1\leq p<P\).
\end{theorem}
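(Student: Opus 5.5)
The plan is to pass to Minty coordinates, where the Monge--Amp\`ere equation becomes a planar Beltrami-type equation, and to reduce the weak estimate \eqref{eq:weak} to a sharp area-distortion inequality for the resulting map. First, the normalization $B_1\subset\Omega\subset B_2$, $u=0$ on $\partial\Omega$ and \eqref{eq:measure} give standard consequences: $h\simeq1$ by Alexandrov and Aleksandrov-type estimates, and, by Caffarelli's $C^{1,\alpha}$ theory (planar strict convexity being automatic), $|\nabla u|\leq C(\lambda,\theta)$ on $S_{\theta'}$ for some $\theta<\theta'<1$. Absence of a singular part of $D^2u$ is then reduced to showing that $\partial u$ maps Lebesgue-null sets to null sets and conversely, which follows from the two-sided bound on $\mu_u$ together with the Minty picture below.

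\textbf{Minty map.} Set $\Phi(x)=x+\nabla u(x)$, which is a $1$-Lipschitz-inverse homeomorphism onto its image (monotone operator). Write $\Phi^{-1}(z)=\tfrac12(z+g(z))$, so that $x-\nabla u(x)=g(\Phi(x))$, where $g$ is $1$-Lipschitz and $Dg$ is symmetric. At points where $D^2u$ has eigenvalues $a\geq b>0$ with $ab=f\in[\lambda,\lambda^{-1}]$, the eigenvalues of $Dg$ are $\frac{1-a}{1+a}$ and $\frac{1-b}{1+b}$. The key point is that the equation becomes a constraint on $Dg$: the product-type relation $ab\in[\lambda,\lambda^{-1}]$ pins the pair of eigenvalues $(\sigma_1,\sigma_2)$ of $Dg$ to a region whose boundary curves through the corners $(\pm1,\mp1)$. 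Near these corners $a\to\infty$ and $b\to0$, and the ratio $(1+\sigma_2)/(1-\sigma_1)$ is comparable to $f$.

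Large Hessian $\{\Delta u>T\}$ corresponds to $a\gtrsim T$, which is exactly where the Jacobian of $\Phi^{-1}$, namely $\frac14(1+\sigma_1)(1+\sigma_2)\simeq\frac{1}{1+a}$, is of size $T^{-1}$. Hence
\[
|\{\Delta u>T\}\cap S_\theta|\simeq T^{-1}\,\bigl|\Phi(\{\Delta u>T\}\cap S_\theta)\bigr|,
\]
and it suffices to show that the set where the Jacobian of $\Phi^{-1}$ is below $T^{-1}$ has measure $\lesssim T^{1-P}$ in $z$-coordinates. Equivalently, we need weak $L^{P-1}$ integrability of $J_{\Phi^{-1}}^{-1}$ on a compact subset.

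\textbf{Main step: sharp distortion.} The map $\Phi^{-1}$, or better a rotation-adjusted version of it, satisfies a Beltrami equation $F_{\bar z}=\nu\,\overline{F_z}$ of the second kind. The corner geometry shows that the relevant distortion is $K=1/\lambda$ in the degenerate direction, rather than $1/\lambda^2$ as in the naive quasiconformality observation. Astala's theorem gives weak $L^{K/(K-1)}$ for the Jacobian in that case. Here, however, we need the reciprocal Jacobian, with exponent $P-1=\frac{2\lambda}{1-\lambda}=\frac{2}{K-1}$, which corresponds to the sharp inverse-Jacobian (area-expansion) endpoint for $K$-qc maps. Astala's estimate in this form is the weak-type bound $|F^{-1}(E)|\lesssim|E|^{1/K}$, together with the endpoint Lorentz refinement of Astala--Iwaniec--Prause--Saksman.

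The main obstacle is justifying distortion $K=1/\lambda$ rather than $1/\lambda^2$: the distortion is only small where the Hessian degenerates. My first attempt is to exploit the \emph{adjoint equation}. The cofactor structure gives a divergence-form equation $\partial_i(U^{ij}w_j)=0$ for derivatives of the partial Legendre transform, which is a linear equation with ellipticity governed by $f$ alone. I would then apply the sharp planar estimates for such divergence-form equations, for which the Leonetti--Nesi/Astala exponent is $(K+1)/(K-1)$ with $K=1/\lambda$, to the stream-function pair. This should yield exactly $P$.

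Finally, I would localize to $S_\theta$ by a covering of $S_\theta$ with Caffarelli sections normalized by affine maps of bounded eccentricity, depending on $\theta$ and $\lambda$. On each section the weak bound is applied, and the bounds are summed over the finitely many sections. The statement $W^{2,p}_{\mathrm{loc}}$ for $p<P$ then follows from $L^{P,\infty}\subset L^p$ on bounded sets.
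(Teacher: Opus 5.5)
\textbf{There is a genuine gap.} Your reduction to Minty coordinates is sound. The Jacobian of $x\mapsto x+\nabla u(x)$ is $1+\sigma+d\asymp\sigma$, so \eqref{eq:weak} is equivalent to $|\{\delta<t\}|\lesssim t^{P-1}$; this is Corollary~\ref{cor:minty} read backwards. But that small-value estimate is the whole theorem, and the tools you invoke do not give it. There are three concrete problems.
\begin{itemize}
\item[(i)] Your $\Phi^{-1}$ is not quasiconformal. Its distortion $(1+\kappa_1)/(1+\kappa_2)$ is of order $\sigma$ wherever the Hessian is large, and the ``rotation-adjusted version'' is never specified. The object that genuinely has distortion $1/\lambda$ is the stream-function pair of the partial Legendre transform $w$. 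This $w$ solves the uniformly elliptic nondivergence equation $f w_{\xi\xi}+w_{\eta\eta}=0$; by contrast, the linearized operator $\partial_i(U^{ij}\partial_j)$ is not uniformly elliptic in terms of $f$ alone. The map $F=w_\xi-iw_\eta=x_1+iu_2$ is then $K$-quasiconformal with $K=1/\lambda$.
\item[(ii)] Even for this $F$, Astala's bound $|F^{-1}(E)|\lesssim|E|^{1/K}$ yields only $|\{J_F<t\}|\lesssim t^{1/(K-1)}$. Here $1/(K-1)=\lambda/(1-\lambda)=(P-1)/2$, not $2/(K-1)$. The radial stretch $z|z|^{K-1}$ shows this is sharp for general $K$-quasiconformal maps, so the missing factor $2$ would have to come from extra structure that you do not identify.
\item[(iii)] The relevant quantity is not a Jacobian at all. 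Write $a=w_{\xi\xi}=1/u_{11}$, $b=w_{\xi\eta}$ and $c=-w_{\eta\eta}=fa$. Then $|\{u_{11}>T\}|=\int_{\{a<1/T\}}a\,d\xi\,d\eta$, whereas $J_F=ac+b^2$ can be large where $a$ is tiny. The Leonetti--Nesi and Astala estimates bound large values of gradients. They say nothing about small values of the positive directional derivative $a$, which is a positive solution of the adjoint equation $(fa)_{\xi\xi}+a_{\eta\eta}=0$. Asserting that this ``should yield exactly $P$'' is where the proof is missing.
\end{itemize}

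That sharp lower-tail estimate is precisely the paper's new ingredient. On the torus, normalized by $\int a=1$, the paper takes a one-variable convex envelope (following Sz\'ekelyhidi) and shows its slope contracts in $L^2$ in the other variable. This proves the truncation inequality $\int(a-s)_+(c-s)_+\ge\int(a-s)(c-s)$. Combined with $\lambda\le c/a\le\lambda^{-1}$, it gives $sI'(s)\ge(P+1)I(s)$ for $I(s)=\int(s-\min(a,c))_+^2$. Hence $|\{\min(a,c)<t\}|\lesssim t^{P-1}$ and $\int_{\{a<t\}}a\lesssim t^P$. The estimate is then localized: the paper extends the coefficient periodically, builds a positive periodic adjoint reference solution, and compares it with $a$ via Bauman's Harnack inequality for normalized adjoint solutions.

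Your argument for the absence of a singular part has a separate gap. The two-sided bound on $\mu_u$ only says that $\partial u$ has the Lusin properties $(N)$ and $(N^{-1})$. That is just the hypothesis, and it does not by itself exclude a singular part of $D^2u$; that exclusion is the content of the De Philippis--Figalli $W^{2,1}$ theorem, which you would have to cite. In Minty terms, $|S(E)|=|E|+\Delta u(E)+\mu_u(E)$ for $S=x+\nabla u$ (your $\Phi$). So having no singular part is equivalent to property $(N)$ for $S$, and this requires quantitative input. The paper obtains it from the uniform bound $\int_E\Delta u_j\,dx\le C|E|^{1-1/P}$ for smooth approximants, which survives weak convergence.
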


The relation to \eqref{eq:figalli} is explicit:
\[
 P-p_*=\frac{2\lambda}{1-\lambda^2}>0.
\]
Theorem~\ref{thm:main} therefore proves the weak estimate asked for
in \cite[Section~5.2, problem~(2)]{DFSurvey}, and also proves strong
\(L^{p_*}\) integrability. The next theorem shows that \(P\) is the
optimal exponent.

\begin{theorem}\label{thm:sharp}
For every \(0<\lambda<1\), there is a strictly convex
\(u\in C^1(\R^2)\) satisfying \eqref{eq:measure}, equal to
\(|x|^2/2\) outside \(Q=(0,1)^2\), such that
\begin{equation}\label{eq:two-sided}
 c_\lambda T^{-P}\leq|\{\Delta u>T\}\cap Q|
 \leq C_\lambda T^{-P}\qquad(T\geq T_\lambda).
\end{equation}
The function is smooth outside a compact set of area zero, and
\(D^2u\notin L^{P,b}(Q)\) for every finite \(b>0\).
Moreover,
\(\|u-|x|^2/2\|_{C^1(\R^2)}\) can be arbitrarily small.
\end{theorem}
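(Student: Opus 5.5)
We construct the example through Minty coordinates. There is a standard dictionary for a convex \(C^1\) function \(u\) on \(\R^2\) with \(u=|x|^2/2\) outside \(Q\). Under it, the map \(\Phi=(x+\nabla u)/\sqrt2\) is a \(1\)-Lipschitz homeomorphism of the plane. The function \(w\) defined by \(\nabla w\circ\Phi=(\nabla u-x)/\sqrt2\) has \(\nabla w\) \(1\)-Lipschitz. Writing \(D^2w\) with eigenvalues \(\kappa_i\in[-1,1]\), the Hessian eigenvalues of \(u\) are \(\lambda_i=(1+\kappa_i)/(1-\kappa_i)\). So \(\det D^2u\in[\lambda,\lambda^{-1}]\) becomes a pointwise constraint on \((\kappa_1,\kappa_2)\). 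A large Hessian eigenvalue \(\lambda_1\approx T\) corresponds to \(1-\kappa_1\approx 2/T\), and then \(\lambda_2\approx 1/T\) up to the factor \(f\). The plan is therefore to build \(w\) in \(Minty\) coordinates, equal to \(0\) outside \(\Phi(Q)\), as a self-similar (iterated) construction. At each generation, \(D^2w\) is pushed closer to the boundary \(\kappa_1=1\) while the density constraint is kept. Then \(u\) is read off by inverting \(\Phi\).

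The basic building block is a one-dimensional ``laminate'' step. Take a strip in \(Minty\) coordinates and let \(u\) locally depend on one variable after a rotation. If \(\det D^2u=f\) alternates between \(\lambda\) and \(\lambda^{-1}\) on thin layers, the effective response is anisotropic. The extreme choice \(f\in\{\lambda,\lambda^{-1}\}\) yields a multiplicative gain \(\lambda^{-1}\) in the ratio of eigenvalues per generation, at a cost in area. Concretely, I would iterate a rescaled, rotated copy of a fixed piece: a bi-layer of quadratic polynomials with densities \(\lambda,\lambda^{-1}\), glued \(C^1\)-ly across interfaces using corner-cut convex functions. At generation \(k\) we want \(|D^2u|\sim \lambda^{-k}\cdot\)(const) on a set of area \(\sim\lambda^{kP}\). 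The exponent \(P=(1+\lambda)/(1-\lambda)\) should come from the fraction of area that must be devoted to the ``bad'' layer so that the average Hessian after homogenization matches the previous generation's affine boundary data. This is the mechanism behind Astala's sharp distortion examples (radial stretchings), transported through the \(1/\lambda\)-quasiconformal correspondence in the \(\kappa\) variables. I would in fact try first to import Astala's extremal construction, i.e. Faraco-type laminates/Cantor-packed radial stretchings attaining \(|\{|Df|>t\}|\sim t^{-2K/(K-1)}\). Here I would choose the distortion so that the pairing gives \(P\) rather than \(p_*\). This means verifying that the relevant Beltrami coefficient lies in the ``one-sided'' set allowed by the density constraint, whose symbol has distortion \(K=1/\lambda\) in the appropriate (non-symmetric) sense.

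Given the construction, the distribution estimate is obtained as follows. Sum over generations, with a geometric count \(N_k\) of cells of area \(r_k^2\). The condition \(\Delta u\simeq\lambda_1\) gives \(|\{\Delta u>T\}\cap Q|\simeq\sum_{k:\lambda^{-k}\gtrsim T}N_kr_k^2\simeq T^{-P}\), where \(T\ge T_\lambda\) ensures we are past the first generation. This yields both sides of \eqref{eq:two-sided}. Failure of \(L^{P,b}\) follows because the dyadic layers \(\{2^j<|D^2u|\le2^{j+1}\}\) each carry \(2^{jP}|\cdot|\simeq1\), so \(\sum_j(2^{jP}|\{|D^2u|>2^j\}|)^{b/P}=\infty\). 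Strict convexity follows from the \(\Phi\)-dictionary, since \(\kappa_i>-1\) uniformly. Smoothness off a compact null set holds because all singularities accumulate on the limiting Cantor set of total area \(\lim N_kr_k^2=0\). Small \(C^1\) norm of \(u-|x|^2/2\) is arranged by starting the iteration with a tiny scale factor, since \(\|\nabla w\|_\infty\) is controlled by the cell size.

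The main obstacle is the single-generation gluing lemma. It must produce, inside a cell with prescribed affine boundary data for \(\nabla w\), a piecewise-smooth \(C^1\) convex replacement whose Hessian satisfies the exact density bounds \(\lambda\le\det\le\lambda^{-1}\). It must also leave the region of the next generation on the correct area fraction, matching \(P\) exactly and not \(p_*\). I would first attempt this with explicit radial stretchings \(x\mapsto |x|^{a-1}x\) in Minty coordinates, truncated and glued to the identity. I would compute the area fraction as a function of \(\lambda\) and check that it equals \(\lambda^{P}\)-scaling per factor-\(\lambda^{-1}\) growth. Errors of bounded multiplicative size in each generation must not accumulate in the exponent.
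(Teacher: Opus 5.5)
\textbf{There is a genuine gap.} The one-generation gadget, which you yourself call the main obstacle, is never constructed. That gadget is the entire content of the theorem, and the mechanism you sketch for it cannot produce the exponent $P$.

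\textbf{Why a fixed gain per generation fails.} Suppose one generation multiplies the Hessian scale by a fixed factor $s>1$ through rank-one splittings. Take the natural two-step splitting \eqref{eq:split}: first raise the large eigenvalue to $st$ at determinant $s$, then lower the small one. It retains the fraction $\vartheta=(1-\tau s)/(s-\tau)$, and one has the identity
\[
 -s\frac{d}{ds}\log\vartheta-\frac{1+\tau}{1-\tau}
 =\frac{\tau(1+\tau)(s-1)^2}{(s-\tau)(1-\tau)(1-\tau s)}>0 .
\]
Integrating from $1$ to $s$ gives $\vartheta<s^{-(1+\tau)/(1-\tau)}\le s^{-P}$ for every $1<s<\tau^{-1}$ and $\tau\ge\lambda$. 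Consequences:
\begin{itemize}
\item Any fixed gain per generation yields a tail $T^{-P'}$ with $P'>P$, so the two-sided bound \eqref{eq:two-sided} fails.
\item Your target $|D^2u|\sim\lambda^{-k}$ uses gain $\lambda^{-1}$ with the extreme densities $\lambda,\lambda^{-1}$. Then the intermediate matrix already has determinant $\lambda^{-1}$, and the retained fraction is exactly $0$.
\item A bounded multiplicative area error per generation also shifts the exponent, because the number of generations is of order $\log T$. The errors must instead have the form $1+\epsilon_k$ with $\sum\epsilon_k<\infty$.
\end{itemize}

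\textbf{How the paper overcomes this.} It lets the steps become infinitesimal: $s_k=(k+1)/k\downarrow1$ and $\tau_k=\lambda+(1-\lambda)k^{-2}\downarrow\lambda$. Then $-\log\vartheta_k=P\log s_k+O(k^{-3})$ is summably close to critical, while $\prod s_k=\infty$, which gives $|E_k|\asymp t_k^{-P}$. It also keeps all determinants in $[\tau_k,\tau_k^{-1}]$, strictly inside $(\lambda,\lambda^{-1})$. This slack absorbs the $O(j^{-1})$ cutoff terms in \eqref{eq:oscillation} and the mollified transition layers needed to glue into cells with affine boundary data. Working exactly at $f\in\{\lambda,\lambda^{-1}\}$, as you propose, leaves no room for that gluing.

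\textbf{The radial-stretching alternative is only a plan.} You do not show how a $1/\lambda$-quasiconformal radial stretching is transported to a convex $u$. A transported quasiconformal map need not carry the gradient structure: in the notation of Lemma~\ref{lem:minty-identities}, $g_{\bar z}$ must be real. You also do not check that any explicit piece satisfies $\lambda\le\det D^2u\le\lambda^{-1}$ together with the exact area--scale relation.

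\textbf{Strict convexity.} Your argument is wrong as stated. In any such construction the small eigenvalue satisfies $\lambda_2\approx f/T\to0$, so $\kappa_2\to-1$ and there is no uniform bound $\kappa_i>-1$. Strict convexity follows instead from the positive lower density bound in the plane, as in the paper.

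\textbf{What is fine.} Once the nested area estimate is established, your remaining steps go through: reading off the tail, failure of $L^{P,b}$, and smoothness off a compact null set.
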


In particular, strong \(L^P\) regularity fails in general, even for
\(C^1\) solutions that agree with a quadratic polynomial outside a
square. For arbitrary density bounds \(0<\alpha<\beta\), replacing
\(u\) by \((\alpha\beta)^{-1/4}u\) shows that the optimal exponent is
\[
 \frac{\sqrt\beta+\sqrt\alpha}
      {\sqrt\beta-\sqrt\alpha}.
\]

The weak endpoint also has the following logarithmic consequence.

\begin{corollary}\label{cor:log}
Under the normalized hypotheses of Theorem~\ref{thm:main}, for
every \(\gamma>1\),
\begin{equation}\label{eq:logarithmic}
 \int_{S_\theta}
 \frac{(\Delta u)^P}
 {[\log(e+\Delta u)]^\gamma}\,dx
 \leq C(\lambda,\theta,\gamma).
\end{equation}
The restriction \(\gamma>1\) is optimal: for the example in
Theorem~\ref{thm:sharp}, the corresponding integral over \(Q\) is
finite if and only if \(\gamma>1\).
\end{corollary}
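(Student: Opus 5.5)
The corollary follows from Theorem~\ref{thm:main} and Theorem~\ref{thm:sharp} by a layer-cake computation; no new PDE input is needed. Set
\[
 \Phi(t)=\frac{t^P}{[\log(e+t)]^\gamma},\qquad t\ge0 .
\]
This function is increasing and $C^1$, with $\Phi'(t)\le C\,t^{P-1}[\log(e+t)]^{-\gamma}$. For the upper bound \eqref{eq:logarithmic}, write
\[
 \int_{S_\theta}\Phi(\Delta u)\,dx=\int_0^\infty\Phi'(T)\,|\{\Delta u>T\}\cap S_\theta|\,dT .
\]
Split the integral at $T=1$. The piece over $(0,1)$ is at most $\Phi(1)|S_\theta|\le \Phi(1)|B_2|$. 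On $(1,\infty)$, insert the weak bound \eqref{eq:weak}, $|\{\Delta u>T\}\cap S_\theta|\le C(\lambda,\theta)T^{-P}$, to get
\[
 \int_1^\infty C\,T^{-1}[\log(e+T)]^{-\gamma}\,dT .
\]
The substitution $s=\log T$ turns this into $\int^\infty s^{-\gamma}\,ds$ up to constants, which is finite exactly when $\gamma>1$. This gives $C(\lambda,\theta,\gamma)$. Since $\Delta u\ge0$ and, by Theorem~\ref{thm:main}, $D^2u$ has no singular part, $\Delta u$ is an $L^1$ function and the layer-cake formula is legitimate.

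For optimality, take the $u$ of Theorem~\ref{thm:sharp}. The same identity over $Q$ together with the upper half of \eqref{eq:two-sided} gives finiteness for $\gamma>1$. For $\gamma\le1$, use the lower half instead: for $T\ge T_\lambda$, $|\{\Delta u>T\}\cap Q|\ge c_\lambda T^{-P}$. We also need a pointwise lower bound $\Phi'(T)\ge c\,T^{P-1}[\log(e+T)]^{-\gamma}$ for $T$ large. It follows from
\[
 \Phi'(t)=\Phi(t)\Bigl(\frac Pt-\frac{\gamma}{(e+t)\log(e+t)}\Bigr),
\]
since the second term in the bracket is $o(1/t)$. Hence the integral dominates $c\int_{T_\lambda'}^\infty T^{-1}(\log T)^{-\gamma}\,dT$, which diverges for $\gamma\le1$. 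If $\gamma\le0$, divergence is even easier: $\Phi(t)\ge t^P$ and $\int t^{P-1}\cdot t^{-P}\,dt=\infty$ by the same argument.

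There is no real obstacle. The only points needing care are the lower bound on $\Phi'$ at infinity, which must be checked to make the divergence direction rigorous, and justifying the layer-cake formula for $\Delta u$ as an absolutely continuous function, which Theorem~\ref{thm:main} provides.
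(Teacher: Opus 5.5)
Your argument matches the paper's: integrate the weak tail \eqref{eq:weak} against $\Phi'$, so that for large $T$ the integrand is at most $C\,T^{-1}(\log T)^{-\gamma}$, and use the lower half of \eqref{eq:two-sided} to get divergence for $\gamma\le1$; it is correct. One small slip is that $\Phi$ is not increasing for every $\gamma>1$ (for instance $\Phi'(2e)<0$ once $\gamma>\tfrac32(1+\log 3)P$), so bound the piece over $(0,1)$ by $|S_\theta|\int_0^1|\Phi'|\,dT$ instead; this is harmless because $|\Phi'(T)|\le(P+\gamma)T^{P-1}[\log(e+T)]^{-\gamma}$, and the same bound justifies Fubini in the layer-cake identity.
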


The endpoint estimates have two further sharp geometric
consequences. The first concerns Minty's parametrization of monotone
graphs. Minty's correspondence between monotone operators and
Lipschitz graphs \cite{Minty1962} provides the geometric origin of
the following coordinates; see also the measure-theoretic theory of
monotone maps developed by Alberti and Ambrosio
\cite{AlbertiAmbrosio1999}. This framework was used for planar
Monge--Amp\`ere regularity in \cite{ADK2011,DGT2026}.

Identify \(\R^2\) with \(\mathbb C\), and define
\begin{equation}\label{eq:minty-definition}
 S(x)=x+\nabla u(x),\qquad
 V=S(\Omega),\qquad
 g(S(x))=\overline{\nabla u(x)-x}.
\end{equation}
Here the bar denotes complex conjugation and
\[
 g_z=\frac12(\partial_{z_1}-i\partial_{z_2})g
\]
is the Wirtinger derivative.

\begin{corollary}\label{cor:minty}
Let \(\Omega\subset\R^2\) be convex and open, and let \(u\) be convex
on \(\Omega\) satisfying \eqref{eq:measure}. Let \(S,V,g\)
be defined by \eqref{eq:minty-definition}. Then
\(S:\Omega\to V\) is a homeomorphism and \(g\) is \(1\)-Lipschitz.
The defect
\[
 \delta=1-|g_z|
\]
is positive almost everywhere in \(V\). For every open set
\(V'\Subset V\),
\begin{equation}\label{eq:minty-upper}
 |\{z\in V':\delta(z)<t\}|
 \leq C\,t^{P-1}
 \quad(0<t<1),\qquad
 \delta^{-1}\in L^{P-1,\infty}(V').
\end{equation}
For each \(0<\lambda<1\), there exist a solution of
\eqref{eq:measure} and an open set \(V'\Subset V\) such that
\begin{equation}\label{eq:minty-sharp}
 c\,t^{P-1}\leq
 |\{z\in V':\delta(z)<t\}|
 \leq C\,t^{P-1}
 \qquad(0<t<t_0).
\end{equation}
For this example,
\(\delta^{-1}\notin L^{P-1,b}(V')\) for every \(0<b<\infty\).
Thus both the power and the weak Lorentz endpoint are optimal.
\end{corollary}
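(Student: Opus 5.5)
The plan is to reduce everything to pointwise linear algebra of the Minty map, and then transfer Theorem~\ref{thm:main} and Theorem~\ref{thm:sharp} through the change of variables \(z=S(x)\).

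\textbf{Homeomorphism and Lipschitz bound.} Since \(\lambda\,dx\le\mu_u\) in the plane, \(u\) is strictly convex, and Caffarelli's theory gives \(u\in C^1(\Omega)\). Hence \(S=\mathrm{id}+\nabla u\) is continuous. It is also strongly monotone: \(\langle S(x)-S(y),x-y\rangle\ge|x-y|^2\). So \(S\) is injective with \(1\)-Lipschitz inverse, and by invariance of domain \(V\) is open and \(S\) is a homeomorphism. Writing \(p=\nabla u(x)\), monotonicity gives
\[
 |(p-x)-(q-y)|^2=|(p+x)-(q+y)|^2-4\langle p-q,x-y\rangle\le|S(x)-S(y)|^2 .
\]
Thus \(g\) is \(1\)-Lipschitz, and conjugation does not affect this.

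\textbf{Pointwise formula for \(\delta\).} By Theorem~\ref{thm:main}, \(D^2u\in W^{1,1}_{\rm loc}\) has no singular part, so \(\nabla u\) is approximately differentiable a.e. with \(A=D^2u(x)\) symmetric, \(\det A=f(x)\in[\lambda,\lambda^{-1}]\), and eigenvalues \(a,b>0\). Since \(S^{-1}\) is Lipschitz and \(S\) is \(W^{1,1}\) with nondegenerate differential \(I+A\), \(S\) and \(S^{-1}\) map null sets to null sets. Therefore at a.e. \(z=S(x)\) the map \(g\) is differentiable with
\[
 Dg=J(A-I)(A+I)^{-1},
\]
where \(J\) is conjugation. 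In an eigenbasis of \(A\), the matrix \(M=(A-I)(A+I)^{-1}\) is diagonal with entries \(\alpha=\frac{a-1}{a+1}\) and \(\beta=\frac{b-1}{b+1}\). For \(J M\), the Wirtinger derivative \(g_z\) is the anticonformal part of \(M\), up to a rotation. Hence
\[
 |g_z|=\frac{|\alpha-\beta|}{2}\quad\text{or}\quad\frac{|\alpha+\beta|}{2},
\]
depending on the convention. I will check that the correct choice is the one tending to \(1\) when one eigenvalue blows up and the other tends to \(0\), namely \(|g_z|=\tfrac12|\alpha-\beta|\). Assuming \(a\ge b\) and using \(ab=f\) and \(\alpha-\beta=\frac{2(a-b)}{(a+1)(b+1)}\), we get
\[
 \delta=1-\frac{a-b}{(a+1)(b+1)}=\frac{2b+2ab}{(a+1)(b+1)}=\frac{2b}{b+1}+\dots,
\]
and more precisely \(\delta=\frac{2b(1+a)}{(a+1)(b+1)}=\frac{2b}{1+b}\). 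This is positive, and since \(b=f/a\) with \(a\ge\sqrt f\), we get \(\delta\simeq\min(1,1/a)\simeq\min(1,1/\Delta u)\). In particular \(\delta>0\) a.e.

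\textbf{Change of variables and the upper bound \eqref{eq:minty-upper}.} Take \(V'\Subset V\) and set \(K=S^{-1}(V')\Subset\Omega\). Then
\[
 |\{z\in V':\delta<t\}|=\int_{K\cap\{\delta\circ S<t\}}\det(I+A)\,dx ,
\]
using the area formula for the injective Sobolev map \(S\). Here \(\det(I+A)=1+\Delta u+f\simeq\Delta u\) on the set \(\{\Delta u\gtrsim 1/t\}\). Hence the measure is bounded by
\[
 C\int_{K\cap\{\Delta u>c/t\}}\Delta u\,dx .
\]
By the layer-cake formula and the weak bound \(|\{\Delta u>s\}\cap K|\le Cs^{-P}\), this is at most
\[
 C\Bigl[(c/t)\,(c/t)^{-P}+\int_{c/t}^\infty s^{-P}\,ds\Bigr]\le Ct^{P-1},
\]
since \(P>1\). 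The local weak estimate on a compact \(K\) follows from Theorem~\ref{thm:main} by covering \(K\) with finitely many sections \(S_\theta\), after affine normalization of sections. The Lorentz statement \(\delta^{-1}\in L^{P-1,\infty}(V')\) is then a restatement of this bound.

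\textbf{Sharpness \eqref{eq:minty-sharp}.} Use the example of Theorem~\ref{thm:sharp} with \(V'=S(Q)\), or \(S(Q')\) for a slightly larger square. The lower bound comes from the same change of variables in reverse: the measure is at least
\[
 \int_{\{\Delta u>C/t\}\cap Q}\Delta u\ge (C/t)\,|\{\Delta u>C/t\}\cap Q|\ge c\,t^{P-1},
\]
using \eqref{eq:two-sided}. The failure of \(\delta^{-1}\in L^{P-1,b}\) follows because the distribution function is comparable to \(t^{P-1}\): the Lorentz quasinorm \(\int_0(s^{P-1}\lambda(s))^{b/(P-1)}\,ds/s\) then has a nonintegrable logarithmic divergence.

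\textbf{Main obstacle.} I expect the main obstacle to be the pointwise Wirtinger computation with the correct conjugation convention, together with justifying the chain rule and area formula a.e. for the merely \(W^{2,1}\)-type map, which is where the absence of a singular part is needed. The identity \(\delta=2b/(1+b)\simeq1/\Delta u\) is the crux of the argument.
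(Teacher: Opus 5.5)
Your route is the paper's: monotonicity of $S$ for the homeomorphism and the Lipschitz bounds, the a.e.\ Wirtinger computation for $\operatorname{diag}(1,-1)(A-I)(A+I)^{-1}$, the change of variables $z=S(x)$ with Jacobian $\det(I+A)=1+\Delta u+\det D^2u\simeq\Delta u$, the layer-cake bound, and Theorem~\ref{thm:sharp} for optimality.

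\textbf{The gap: Lusin's condition (N) for $S$.} You claim $S$ maps null sets to null sets because it is $W^{1,1}$ with nondegenerate differential $I+A$. That inference is not valid. A.e.\ invertibility of $DS$ does not give Lusin's condition (N) for $S$; it bears on the inverse, which is Lipschitz anyway. Sobolev regularity alone guarantees (N) for planar homeomorphisms only from $W^{1,2}_{\mathrm{loc}}$ on. Here $DS\in L^{P,\infty}_{\mathrm{loc}}$ gives no $L^2$ bound when $\lambda\le 1/3$, i.e.\ when $P\le 2$.

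You need (N) in two places. First, to transfer the pointwise formula from a.e.\ $x$ to a.e.\ $z$, which includes the claim $\delta>0$ a.e.\ in $V$. Second, for the inequality $|S(E)|\le\int_E\det(I+A)\,dx$ in the upper bound. The reverse inequality, which is all the sharpness part needs, does hold for Sobolev homeomorphisms without (N).

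The missing mechanism is the paper's measure identity. Since $S=\nabla(u+q)$ with $q=|x|^2/2$, one has $|S(E)|=\mu_{u+q}(E)$. For smooth convex approximants, $\det D^2(u_j+q)=1+\Delta u_j+\det D^2u_j$. Weak continuity of Monge--Amp\`ere measures and of the Laplacians then gives $\mu_{u+q}=\mu_u+\Delta u+dx$ as measures. Because $\Delta u$ has no singular part by Theorem~\ref{thm:main}, $\mu_{u+q}=(1+\sigma+d)\,dx$. This is simultaneously (N) and the area formula \eqref{eq:minty-area}. You rightly suspected that the absence of a singular part is the key input; this identity is exactly where it enters.

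\textbf{The formula for $\delta$.} Your algebra is wrong: $1-\frac{a-b}{(a+1)(b+1)}=\frac{1+ab+2b}{(1+a)(1+b)}$, not $\frac{2b}{1+b}$. You replaced $1$ by $ab$; for $a=b=2$ your formula gives $4/3$, which is impossible for $1-|g_z|$. Since you call this identity the crux, it should be corrected. The conclusion survives. Because $ab=f\in[\lambda,\lambda^{-1}]$ and $b\le\lambda^{-1/2}$, the numerator lies in $[1+\lambda,(1+\lambda^{-1/2})^2]$ and $1+b$ lies in $[1,1+\lambda^{-1/2}]$. Hence $\delta\simeq 1/(1+a)\simeq 1/\Delta u$, which is \eqref{eq:minty-comparable}.

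There is also no convention ambiguity in $|g_z|$. You have $g=\bar h$ with $Dh=M$ symmetric, so $|g_z|=|h_{\bar z}|=|\alpha-\beta|/2$.

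\textbf{The Lorentz divergence.} It occurs at large levels of $\delta^{-1}$:
$\int^\infty [R^{P-1}|\{\delta^{-1}>R\}\cap V'|]^{b/(P-1)}\,dR/R=\infty$.
Your integral $\int_0 (s^{P-1}\lambda(s))^{b/(P-1)}\,ds/s$ has the wrong sign in the exponent if $s$ is the level of $\delta$; the weight should be $s^{-(P-1)}|\{\delta<s\}|$.
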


Theorem~\ref{thm:main} yields the exponent \(P-1\) in
\eqref{eq:minty-upper}; the loss of one power relative to \(P\)
comes from the Jacobian of the Minty map.
Theorem~\ref{thm:sharp} shows that both the exponent and the weak
Lorentz endpoint are optimal.

The second consequence is a quantitative Lusin-type restriction of
the gradient map. The pointwise inequalities of Bojarski and
Haj\l asz \cite{BojarskiHajlasz1993} bound Sobolev increments by
maximal functions of weak derivatives. Applying this mechanism to
both \(\nabla u\) and its Legendre-dual inverse leads to the following
result.

\begin{corollary}\label{cor:bilip}
Let \(\Omega\subset\R^2\) be convex and open, and let \(u\) be convex
on \(\Omega\) satisfying \eqref{eq:measure}. Set \(T=\nabla u\).
For every compact set \(K\Subset\Omega\), there is a finite constant
\(C\) such that, for all sufficiently large \(L\), there is a compact set
\(E_L\subset K\) satisfying
\begin{equation}\label{eq:bilip-exception}
 |K\setminus E_L|
 +|T(K)\setminus T(E_L)|
 \leq CL^{-P}
\end{equation}
and
\begin{equation}\label{eq:bilip-on-good}
 L^{-1}|x-y|
 \leq |T(x)-T(y)|
 \leq L|x-y|
 \qquad(x,y\in E_L).
\end{equation}
The exponent \(P\) is optimal: for every \(0<\lambda<1\), there
exist a solution of \eqref{eq:measure} and a compact set
\(K\Subset\Omega\) such that every measurable \(E\subset K\) with
\(\operatorname{Lip}(T|_E)\leq L\) satisfies
\begin{equation}\label{eq:bilip-sharp}
 |K\setminus E|\geq cL^{-P}
\end{equation}
for all sufficiently large \(L\).
\end{corollary}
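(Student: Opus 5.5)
The upper bound \eqref{eq:bilip-exception}--\eqref{eq:bilip-on-good} follows the Bojarski--Haj\l asz mechanism applied twice, once to $T=\nabla u$ and once to the inverse map $T^{-1}=\nabla u^*$. Here $u^*$ is the Legendre transform, and it solves the dual equation with $\lambda\,dy\le\mu_{u^*}\le\lambda^{-1}dy$ on $T(\Omega)$.

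\emph{Step 1: localize.} Fix $K\Subset\Omega$ and choose a convex neighbourhood $K\Subset\Omega'\Subset\Omega$. Since $\nabla u\in W^{1,1}_{\mathrm{loc}}$ by Theorem~\ref{thm:main}, the pointwise inequality applies on a ball $B\supset K$ with $B\Subset\Omega$:
\[
|\nabla u(x)-\nabla u(y)|\le C|x-y|\bigl(M(|D^2u|\chi_B)(x)+M(|D^2u|\chi_B)(y)\bigr)
\]
for a.e.\ $x,y\in K$ with $|x-y|$ small. Large separations are handled by the boundedness of $\nabla u$ on $K$. The maximal operator is bounded on $L^{P,\infty}$ because $P>1$. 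So $\{M(|D^2u|\chi_B)>L/(2C)\}$ has measure at most $CL^{-P}$, using $|D^2u|\lesssim\Delta u$ for convex $u$. This gives the upper Lipschitz bound off a set of measure $\lesssim L^{-P}$.

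\emph{Step 2: the dual map.} The set $T(\Omega')$ is open and $T$ is a homeomorphism, by strict convexity in the plane. Apply Theorem~\ref{thm:main} to $u^*$ on a neighbourhood of the compact set $T(K)$ after normalizing by sections. The same argument yields a bad set $F'\subset T(B)$ with $|F'|\le CL^{-P}$, outside which $T^{-1}$ is $L$-Lipschitz.

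\emph{Step 3: combine.} Set $E_L$ to be a compact subset of $K\setminus(F\cup T^{-1}(F'))$ that loses only $L^{-P}$ more measure, where $F$ is the bad set from Step 1. We then have
\[
|T^{-1}(F')|=\mu_{u}(T^{-1}F')\cdot(\text{density})^{-1}\le\lambda^{-1}|F'|,
\]
since $|T(A)|=\mu_u(A)\ge\lambda|A|$. Likewise $|T(K)\setminus T(E_L)|=\mu_u(K\setminus E_L)\le\lambda^{-1}|K\setminus E_L|$. This proves \eqref{eq:bilip-exception}, and \eqref{eq:bilip-on-good} follows from the two Lipschitz bounds for $T$ and $T^{-1}$. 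A technical point is that the pointwise inequality holds only between Lebesgue points and within distance $\delta$. For pairs at distance at least $\delta$, I take $L$ large so that $L\delta\ge\operatorname{diam}T(K)$ and $L^{-1}|x-y|\le|T(x)-T(y)|$; the latter uses the uniform modulus of injectivity of $T^{-1}$ on the compact set $T(K)$, namely that $T^{-1}$ is continuous.

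\emph{Step 4: sharpness \eqref{eq:bilip-sharp}.} Use the example $u$ of Theorem~\ref{thm:sharp} with $K=\overline Q$. At a density point $x$ of $E$ where $u$ is twice differentiable, a Lipschitz bound on $T|_E$ forces $|D^2u(x)|\le L$. Hence
\[
E\subset\{\Delta u\le 2L\}\cup N,\qquad |N|=0,
\]
and therefore $|K\setminus E|\ge|\{\Delta u>2L\}\cap Q|\ge c(2L)^{-P}$ by \eqref{eq:two-sided}. The main obstacle is justifying $|D^2u|\le L$ a.e.\ on $E$ for a merely $W^{2,1}$ (a.e.\ approximately differentiable gradient) function. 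For this I would use that $\nabla u$ is approximately differentiable a.e.\ with approximate differential $D^2u$. At density points of $E$, the approximate derivative is computed along directions reached by points of $E$, which are dense in all directions near $x$, so the Lipschitz bound passes to it.
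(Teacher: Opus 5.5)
Your proposal is correct and follows essentially the paper's route. You apply the Bojarski--Haj\l asz maximal-function inequality to both $T=\nabla u$ and $T^{-1}=\nabla u^*$, with weak-$L^P$ Hessians from Theorem~\ref{thm:main}. You combine the two bad sets through $\lambda|A|\le|T(A)|\le\lambda^{-1}|A|$, and you obtain sharpness from the lower tail in \eqref{eq:two-sided}.

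The differences are only technical. The paper globalizes with cutoffs $\chi T$ and $\eta T^{-1}$, so its pointwise inequality holds for all pairs; you instead treat large separations by compactness, which is fine. One small correction: a ball $B\supset K$ with $B\Subset\Omega$ need not exist when $\Omega$ is thin, so use a neighbourhood $\{\dist(\cdot,K)<2\delta\}$; this suffices because you only use the inequality at small separations. In the sharpness step, the paper passes from $\operatorname{Lip}(T|_E)\le L$ to $\|D^2u\|\le L$ a.e.\ on $E$ via Kirszbraun's extension and locality of weak derivatives. Your (approximate) differentiability argument at density points of $E$ works equally well.
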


The weak endpoint in Theorem~\ref{thm:main} determines the exact
exceptional-set power in \eqref{eq:bilip-exception}, while
Theorem~\ref{thm:sharp} proves its optimality.

The gradient-map conclusion also fits into the broader Sobolev
theory for optimal transport. De Philippis and Figalli
\cite{DFStability} established strong local \(W^{2,1}\) stability
for zero-boundary Dirichlet solutions under uniform positive density
bounds and strong \(L^1_{\mathrm{loc}}\) convergence of the
densities. They also proved strong local \(W^{1,1}\) stability of
optimal transport maps. For the natural boundary condition arising
in optimal transport between bounded convex domains, Savin and Yu
\cite{SavinYu2020} established global
\(W^{2,1+\varepsilon}\) estimates up to the boundary when the density
is bounded above and below by positive constants.

For Monge--Amp\`ere type equations, De Philippis and Figalli
\cite{DFType} obtained local \(W^{2,1+\varepsilon}\) regularity for
strictly \(c\)-convex transport potentials under suitable structural
assumptions on the cost and positive upper and lower density bounds.
In the planar optimal transport setting, Figalli and Loeper
\cite{FigalliLoeper2009} proved \(C^1\) regularity under an upper
bound on the Monge--Amp\`ere measure and suitable structural and
convexity assumptions on the cost and domains. For general costs and
smooth positive densities, De Philippis and Figalli
\cite{DFPartialRegularity} proved that optimal transport maps are
smooth outside a closed singular set of measure zero. For positive
continuous densities, Chen and Figalli \cite{ChenFigalli2017}
subsequently established \(W^{2,p}_{\mathrm{loc}}\) regularity of
transport potentials for every finite \(p\geq1\), outside such a
singular set.

In quadratic-cost optimal transport, Brenier's theorem
\cite{Brenier1991} identifies the optimal map with the gradient of a
convex potential. Caffarelli's interior regularity theory
\cite{CaffarelliMaps1992} makes Corollary~\ref{cor:bilip} applicable
to transport between bounded convex domains with positive bounded
densities. More precisely, let
\(\mu_i=\rho_i\,dx\) be probability measures on bounded open convex
domains \(\Omega_i\), with
\[
 0<a\leq\rho_i\leq b,\qquad a<b.
\]
For every compact \(K\Subset\Omega_0\) and all sufficiently large
\(L\), there is a compact \(E_L\subset K\) such that the Brenier map
\(T=\nabla u\) satisfies \eqref{eq:bilip-on-good} on \(E_L\), and
\begin{equation}\label{eq:transport-mass}
 \mu_0(K\setminus E_L)
 =
 \mu_1\bigl(T(K)\setminus T(E_L)\bigr)
 \leq C_KL^{-(a+b)/(b-a)}.
\end{equation}
This exponent is optimal. The proofs and the transport formulation
are contained in Section~\ref{sec:minty}.

We briefly describe the proof. For a smooth solution with \(D^2u>0\),
the partial Legendre transform defines two positive functions
\[
 a=\frac1{u_{11}},\qquad
 c=\frac{\det D^2u}{u_{11}}.
\]
In the transformed coordinates, denoted by \((\xi,\eta)\), they
satisfy
\[
 c_{\xi\xi}+a_{\eta\eta}=0,\qquad
 \lambda\leq\frac ca\leq\lambda^{-1}.
\]
We first study a periodic model on \(Q=(0,1)^2\). Periodicity permits
an exact potential representation and removes boundary terms from
integration by parts. Multiplying \(a\) and \(c\) by the same positive
constant preserves both the equation and the ratio bound, so we
normalize
\[
 \int_Qa=1.
\]
The main observation is the quadratic truncation inequality
\[
 \int_Q(a-s)_+(c-s)_+
 \geq\int_Q(a-s)(c-s),
 \qquad 0<s<1.
\]
To prove it, we adapt the one-coordinate envelope argument of
Sz\'ekelyhidi \cite[Section~3]{Szekelyhidi}. We take the convex
envelope of a suitable potential in one coordinate and show that this
operation does not increase the \(L^2\) variation of its slope in the
other coordinate. Combining the resulting truncation inequality with
the ratio bound, and varying the truncation level, we obtain a
small-value estimate for \(\min(a,c)\) with the precise exponent
\(P-1\).

For localization, we adapt the adjoint-solution comparison used by
Wang and Zhou \cite{WangZhou} in their study of the planar linearized
Monge--Amp\`ere equation. We extend only the coefficient \(c/a\)
periodically and construct a positive periodic adjoint solution for
the extended coefficient. The Harnack theorem of Bauman
\cite[Theorem~4.4]{Bauman} for normalized adjoint solutions controls
the ratio of this reference solution to the original local solution,
with constants depending only on ellipticity. This transfers the
periodic estimate without changing the exponent. Returning to the
original coordinates introduces the Jacobian factor \(a\), which
converts the small-value estimate into the weak-\(L^P\) Hessian
estimate. To pass to Alexandrov solutions, we rewrite this estimate
as a uniform bound on the Laplacian measures of measurable sets.
This bound passes to the limit under weak convergence and excludes a
singular part of the distributional Hessian.

For sharpness, we follow the iterative rank-one splitting construction
of Faraco \cite{Faraco2003} and the convex-integration method of
Astala, Faraco and Sz\'ekelyhidi \cite{AFS2008}. At each stage, two
successive rank-one splittings are realized by smooth Hessian
oscillations with arbitrarily small \(C^1\) perturbations, while
positivity and the determinant bounds are preserved. The scales and
splitting proportions are chosen so that the retained area is
comparable to the inverse \(P\)-th power of the Hessian scale.
The relative area errors are summable, so this relation persists
through the iteration. The trace bounds then establish the two-sided
estimate \eqref{eq:two-sided} for the resulting \(C^1\) limit.

Section~\ref{sec:periodic} proves the periodic truncation and
small-value estimates. Section~\ref{sec:local} establishes the local
adjoint estimate. Section~\ref{sec:mainproof} proves
Theorem~\ref{thm:main} and Corollary~\ref{cor:log}.
Section~\ref{sec:sharpness} completes the sharpness construction.
Section~\ref{sec:minty} proves Corollaries~\ref{cor:minty}
and~\ref{cor:bilip}, including the optimal transport application.

\section{A periodic truncation estimate}\label{sec:periodic}
We begin on the unit square $Q=(0,1)^2$, with all periodic functions
extended to $\R^2$. We write \(t_+=\max\{t,0\}\) and \(t_-=\max\{-t,0\}\). Positive constants denoted by $c$ and $C$ may change from line to line.

The partial Legendre transform used in
Section~\ref{sec:mainproof} motivates the following periodic problem.
In the transformed coordinates $(\xi,\eta)=(u_1(x_1,x_2),x_2)$, the two positive functions
\[
 a=\frac1{u_{11}},
 \qquad
 c=\frac{\det D^2u}{u_{11}}
\]
satisfy
\[
 c_{\xi\xi}+a_{\eta\eta}=0,
 \qquad
 \lambda\leq\frac ca\leq\lambda^{-1}.
\]
We first isolate the truncation inequality that depends only on the
differential relation; the ratio bound will be used afterward to
derive the critical lower tail.
\begin{lemma}\label{lem:truncation}
Suppose that \(a\) and \(c\) are smooth, positive, periodic functions satisfying
\[
 c_{xx}+a_{yy}=0,\qquad \int_Q a=1.
\]
Then for every $0<s<1$, we have
\begin{equation}\label{eq:truncation}
 \int_Q(a-s)_+(c-s)_+\geq\int_Q(a-s)(c-s).
\end{equation}
\end{lemma}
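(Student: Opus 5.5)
The plan is to represent the periodic pair \((a,c)\) by a single potential, write both sides of \eqref{eq:truncation} as a common main term minus an \(L^2\) cross-derivative term, and then apply a one-coordinate convex envelope in the spirit of Sz\'ekelyhidi \cite[Section~3]{Szekelyhidi}. The point to prove is that the envelope does not increase the \(L^2\) norm of the mixed derivative.

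\emph{Step 1: potential representation.} Expand \(a\) and \(c\) in Fourier series on \(Q\). The relation \(k^2\hat c(k,l)+l^2\hat a(k,l)=0\) gives \(\hat c(k,0)=0\) for \(k\neq0\) and \(\hat a(0,l)=0\) for \(l\neq0\). Hence there are a smooth periodic \(\varphi\) and a smooth periodic \(\gamma=\gamma(y)\) with
\[
 a=1+\varphi_{xx},\qquad c=\gamma(y)-\varphi_{yy},\qquad \int_0^1\gamma=\int_Q c=:\bar c ,
\]
using \(\int_Q a=1\). Set \(\Psi(x,y)=(1-s)x^2/2+\varphi(x,y)\), so that \(\Psi_{xx}=a-s\). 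Since \(\int_0^1\varphi_{xx}\,dx=0\), integration by parts over the torus gives
\[
 \int_Q(a-s)(c-s)=(1-s)(\bar c-s)-\int_Q\varphi_{xy}^2 .
\]

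\emph{Step 2: the envelope.} For each \(y\), let \(\Psi^*(\cdot,y)\) be the convex envelope in \(x\) of \(\Psi(\cdot,y)\). This envelope exists because \(1-s>0\) and \(\varphi\) is periodic. By translation invariance it has the form \(\Psi^*=(1-s)x^2/2+\varphi^*\) with \(\varphi^*\) periodic in \(x\); it is also periodic in \(y\).

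Put \(b=\Psi^*_{xx}\ge0\). On the contact set \(b=a-s\ge0\), while on each noncontact interval \(b=0\). Hence \(0\le b\le (a-s)_+\), and in particular \(b(c-s)\le(a-s)_+(c-s)_+\) pointwise. Since \(\int_0^1\varphi^*_{xx}\,dx=0\), the same integration by parts as in Step 1 yields
\[
 \int_Q b\,(c-s)=(1-s)(\bar c-s)-\int_Q\varphi^*_{xy}\varphi_{xy}.
\]
By Cauchy--Schwarz, \eqref{eq:truncation} then follows from
\[
 \int_Q|\varphi^*_{xy}|^2\le\int_Q|\varphi_{xy}|^2 ,
\]
that is, from the statement that the envelope does not increase the \(L^2\) variation in \(y\) of the slope \(\Psi_x\).

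\emph{Step 3: the key inequality (main obstacle).} On the contact set, \(\Psi^*_x=\Psi_x\), and at density points one expects \(\Psi^*_{xy}=\Psi_{xy}\). On a noncontact interval \((x_1(y),x_2(y))\), the slope \(\Psi^*_x\) equals the chord slope \((\Psi(x_2,y)-\Psi(x_1,y))/(x_2-x_1)\). The tangency conditions \(\Psi_x(x_i)=\) chord slope kill the first-order contributions from moving endpoints, so the \(y\)-derivative of the chord slope is
\[
 \frac{\Psi_y(x_2,y)-\Psi_y(x_1,y)}{x_2-x_1}=\frac{1}{x_2-x_1}\int_{x_1}^{x_2}\Psi_{xy}\,dx .
\]
Jensen's inequality on each interval then gives the bound on each \(y\)-slice, and integrating in \(y\) gives Step 3.

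The difficulty is justifying this, since \(\Psi^*\) is only Lipschitz in \(y\) and the contact structure can change with \(y\). I would first try a difference-quotient version. For fixed \(h\), compare the envelopes at heights \(y\) and \(y+h\) using the variational characterization of the chord slope, aiming for
\[
 \int_0^1|\Psi^*_x(x,y+h)-\Psi^*_x(x,y)|^2dx\lesssim\int_0^1|\Psi_x(x,y+h)-\Psi_x(x,y)|^2dx+o(h^2)
\]
after integration in \(y\). If that is awkward, I would instead replace \(\varphi\) by a generic smooth perturbation, for which the contact set has finitely many components depending smoothly on \(y\) except at finitely many heights. In that case the envelope-theorem computation above is rigorous, and one passes to the limit using the stability of convex envelopes under uniform convergence together with lower semicontinuity of the \(L^2\) norm.
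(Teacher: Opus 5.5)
Your Steps 1 and 2 follow the paper's proof. You use the same potential representation; your $\gamma(y)$ can be absorbed into $\varphi$ to give the paper's $c=\bar c-\chi_{yy}$. You take the same $x$-envelope of $(1-s)x^2/2+\varphi$, prove the same bound $0\le r_x\le (a-s)_+$ for $r=\Psi^*_x$, and make the same Cauchy--Schwarz reduction to $\int_Q r_y^2\le\int_Q\varphi_{xy}^2$.

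The gap is Step 3. It carries the whole content of the lemma, and you leave it unproved.

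\begin{itemize}
\item Your Jensen/envelope-theorem computation is only formal. It needs endpoints $x_i(y)$ that are differentiable in $y$ and a contact structure that is stable in $y$. It also presupposes that $r$ has a weak $y$-derivative. You use that same fact, without proof, for the integration by parts $\int_Q\varphi^*_{xx}\varphi_{yy}=\int_Q\varphi^*_{xy}\varphi_{xy}$ in Step 2.
\item Your first fallback targets a bound with $\lesssim$. Any constant larger than $1$ gives only $\int_Q r_y^2\le C\int_Q\varphi_{xy}^2$, and then the Cauchy--Schwarz step no longer closes. The constant must be exactly $1$.
\item Your second fallback assumes that a generic perturbation has finitely many contact components depending smoothly on $y$, except at finitely many heights. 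This is an unproved transversality statement about convex envelopes of one-parameter families, covering births, mergers and degenerations of bitangents. It is harder than the lemma itself.
\end{itemize}

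The missing idea is that, on each horizontal line, $r$ is the $L^2(0,1)$ projection of $f=\Psi_x$ onto nondecreasing functions $g$ with $g-(1-s)x$ periodic. It is therefore firmly nonexpansive, and proving this needs no information about how the contact set moves with $y$.

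For two heights $y_1,y_2$ set $f_i=\Psi_x(\cdot,y_i)$, $r_i=\Psi^*_x(\cdot,y_i)$ and $\omega_i=(\Psi-\Psi^*)(\cdot,y_i)\ge0$. Then $f_i-r_i=(\omega_i)_x$, $\omega_i(r_i)_x=0$ a.e., and $(r_j)_x\ge0$. Since $\omega_i$ and $r_j-r_i$ are periodic, one integration by parts gives
\[
\int_0^1(f_i-r_i)(r_j-r_i)\,dx=-\int_0^1\omega_i\,(r_j)_x\,dx\le0 .
\]
Adding the cases $(i,j)=(1,2)$ and $(2,1)$ gives $\|r_1-r_2\|_{L^2(0,1)}^2\le\int_0^1(f_1-f_2)(r_1-r_2)\,dx$. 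Hence $\|r_1-r_2\|_{L^2(0,1)}\le\|f_1-f_2\|_{L^2(0,1)}$, with constant $1$ and no error term.

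Now take $y_2=y_1+h$, divide by $h^2$, integrate over $y_1$ and let $h\to0$. The difference-quotient characterization of $H^1$ then gives $r-(1-s)x\in H^1_{\mathrm{per}}(Q)$ with $\int_Q r_y^2\le\int_Q\varphi_{xy}^2$. This is exactly the paper's route, and it also justifies your Step 2 integration by parts.
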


\begin{proof}
Let $\bar c=\int_Qc$, and solve the periodic Poisson equation
\[
 \Delta\chi=(a-1)-(c-\bar c),\qquad \int_Q\chi=0.
\]
The mean-zero periodic function $\chi_{xx}-(a-1)$ is harmonic and then vanishes. So
\begin{equation}\label{eq:potential}
 a=1+\chi_{xx},\qquad c=\bar c-\chi_{yy}.
\end{equation}
Fix $s\in(0,1)$. For each $y$, let $\psi(\cdot,y)$ be the greatest convex
function on $\R$ below
\[
 F(x,y)=\frac{1-s}{2}x^2+\chi(x,y).
\]
Quadratic growth makes this envelope finite. Translation of its convex
minorants shows that
\[
 \psi(x+1,y)=\psi(x,y)+(1-s)x+\frac{1-s}{2},\qquad \psi(x,y+1)=\psi(x,y).
\]
Consequently, if $r=\psi_x$ and $\omega=F-\psi$, then
$r-(1-s)x$ and $\omega$ are periodic.

We recall the elementary properties needed below. On every component of
$\{\psi<F\}$ in a horizontal line, $\psi$ is affine: otherwise replacing a short
piece by its chord would contradict maximality. Such components are bounded
by quadratic growth. At a contact point, \(\psi=F\) and \(\psi\leq F\) nearby. Convexity then
forces both one-sided slopes of \(\psi\) to equal \(F_x\). Thus $r$ is continuous, equals
$F_x$ on the contact set, and is constant on each remaining interval.
Comparison of contact endpoints also shows that $r$ is Lipschitz in $x$,
with Lipschitz constant at most $\|F_{xx}\|_\infty$.
On the contact set, the Lipschitz functions $r$ and $F_x$ have the same
derivative almost everywhere. Hence
\begin{equation}\label{eq:contact}
 0\leq r_x\leq(a-s)_+,\qquad \omega r_x=0
 \quad\hbox{almost everywhere}.
\end{equation}

The variation of $r$ in $y$ is controlled without taking a second derivative
of $\psi$ in that direction. Compare two horizontal lines, and denote the
corresponding functions by $f_i=F_x(\cdot,y_i)$, $r_i$ and $\omega_i$.
Both $f_i-r_i$ and $r_j-r_i$ are periodic. Integration by parts and
\eqref{eq:contact} imply
\[
 \int_0^1(f_i-r_i)(r_j-r_i)= \int_0^1 (\omega_i)_x(r_j - r_i)
 =-\int_0^1\omega_i(r_j)_x\leq0.
\]
Adding the inequalities for $(i,j)=(1,2),(2,1)$ and applying
Cauchy--Schwarz on $(0,1)$ yields
\begin{equation}\label{eq:contraction}
 \|r_1-r_2\|_2^2
 \leq\int_0^1(f_1-f_2)(r_1-r_2)
 \leq\|f_1-f_2\|_2\|r_1-r_2\|_2.
\end{equation}
The convex envelope changes by at most the uniform change in the
obstacle. Hence $\psi$ is continuous and $r$ is measurable.
Since the affine parts cancel, we may apply \eqref{eq:contraction}
with $y_2=y_1+\varepsilon$, divide by $\varepsilon^2$ and integrate
over one period in $y_1$. The difference-quotient characterization
of $H^1$ then yields
\begin{equation}\label{eq:transverse}
 r-(1-s)x\in H^1_{\rm per}(Q),\qquad
 \int_Q r_y^2\leq\int_Q\chi_{xy}^2.
\end{equation}

We now integrate by parts. From
\eqref{eq:potential}, periodicity and \eqref{eq:transverse},
\begin{align*}
 \int_Q(a-s)(c-s)
 &=(1-s)(\bar c-s)-\int_Q\chi_{xy}^2\\
 &\leq(1-s)(\bar c-s)-\int_Qr_y\chi_{xy}\\
 &=\int_Qr_x(\bar c -s) - \int_Qr_x \chi_{yy}   \\
 &=\int_Qr_x(c-s)
 \leq\int_Q(a-s)_+(c-s)_+.
\end{align*}
The first equality is due to $\int_Q \chi_{xx} =0$ and $\int_Q \chi_{yy}=0$. The first inequality is due to $\int _Q r_y \chi _{xy} \leq (\int_Q r_y^2)^\frac{1}{2}(\int_Q \chi_{xy}^2)^\frac{1}{2} \leq \int _Q \chi_{xy}^2$. The last inequality also holds where $c<s$, because $r_x\geq0$ there.
This proves \eqref{eq:truncation}.
\end{proof}
We now combine \eqref{eq:truncation} with
\[
 \lambda\leq\frac ca\leq\lambda^{-1}.
\]
This converts the quadratic inequality into a differential inequality
for the lower tail of \(\min(a,c)\).
\begin{proposition}\label{prop:periodic}
Under the hypotheses of Lemma~\ref{lem:truncation}, suppose also
$\lambda\leq c/a\leq\lambda^{-1}$. Then
\begin{equation}\label{eq:periodic-tail}
 |\{\min(a,c)<t\}\cap Q|\leq C_\lambda t^{P-1},\qquad
 \int_{Q\cap\{a<t\}}a\leq C_\lambda t^P\quad(t>0).
\end{equation}
For a square of arbitrary side length and $\fint_Qa=1$, the right-hand
sides are multiplied by $|Q|$.
\end{proposition}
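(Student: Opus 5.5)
The plan is to turn the truncation inequality \eqref{eq:truncation} into an integral inequality for the single function \(m=\min(a,c)\), using the ratio bound pointwise. That inequality will be a differential inequality in \(s\), and integrating it gives the power \(P+1=2/(1-\lambda)\) for a quadratic tail functional of \(m\). Chebyshev then gives the two estimates in \eqref{eq:periodic-tail}.

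\emph{Step 1: pointwise reduction.} Write \(M=\max(a,c)\), so \(m\leq M\leq m/\lambda\) by the ratio bound. Moving everything in \eqref{eq:truncation} to one side gives \(\int_Q K_s\leq 0\), where
\[
K_s=(a-s)(c-s)-(a-s)_+(c-s)_+ .
\]
I claim that, pointwise,
\[
K_s\geq (s-m)_+\,(s-m/\lambda).
\]
This is checked in three cases.
\begin{itemize}
\item If \(m\geq s\), both sides vanish.
\item If \(M<s\), then \(K_s=(s-m)(s-M)\geq(s-m)(s-m/\lambda)\), since \(M\leq m/\lambda\).
\item If \(m<s\leq M\), then \(K_s=-(s-m)(M-s)\geq-(s-m)(m/\lambda-s)\), again since \(M\leq m/\lambda\).
\end{itemize}
Hence
\[
\int_Q(s-m)_+^2\leq\Bigl(\tfrac1\lambda-1\Bigr)\int_Q m\,(s-m)_+ \qquad(0<s<1).
\]

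\emph{Step 2: ODE for the tail.} Put \(\Phi(s)=\int_Q(s-m)_+\) and \(I(s)=\int_0^s\Phi=\tfrac12\int_Q(s-m)_+^2\). Using \(m(s-m)_+=s(s-m)_+-(s-m)_+^2\), the inequality of Step 1 becomes
\[
\tfrac1\lambda\cdot 2I\leq(\tfrac1\lambda-1)\,sI',
\qquad\text{that is,}\qquad
sI'(s)\geq \tfrac{2}{1-\lambda}\,I(s).
\]
Here \(I\) is \(C^1\), since \(\Phi\) is continuous. So \(s^{-2/(1-\lambda)}I(s)\) is nondecreasing on \((0,1)\). Together with \(I(1)\leq\tfrac12|Q|\), this gives
\[
I(s)\leq C s^{2/(1-\lambda)}=C s^{P+1}.
\]

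\emph{Step 3: Chebyshev.} For \(t\leq 1/2\), on \(\{m<t\}\) we have \((2t-m)_+^2\geq t^2\). Therefore
\[
|\{m<t\}|\leq 2I(2t)/t^2\leq C t^{P-1}.
\]
For \(t>1/2\) the bound is trivial, since \(|\{m<t\}|\leq|Q|\leq C t^{P-1}\). For the second estimate, \(\{a<t\}\subset\{m<t\}\), so
\[
\int_{\{a<t\}}a\leq t\,|\{m<t\}|\leq Ct^P.
\]

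\emph{Step 4: general squares.} For a square of side \(\ell\), I would rescale coordinates to the unit square. The equation \(c_{xx}+a_{yy}=0\), the ratio bound and the normalization \(\fint a=1\) are all preserved, and the measures scale by \(|Q|\).

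The main obstacle is Step 1, where one must choose the right pointwise lower bound. It has to be linear in the truncation structure so that it produces the exact exponent, and the extremal configuration \(M=m/\lambda\) shows it is sharp. After that, Steps 2–4 are a routine ODE comparison and Chebyshev.
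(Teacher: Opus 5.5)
Your proof is correct and follows the paper's argument essentially step for step. The paper also uses $\max(a,c)\le v/\lambda$ to bound the integrand difference $(s-v)_+(\max(a,c)-s)$, derives $sI'(s)\ge (P+1)I(s)$ for $I(s)=\int_Q(s-v)_+^2$ (your $I$ is half of this), integrates from $I(1)\le|Q|$, and applies Chebyshev at level $2t$. Your handling of the second estimate, via $a<t$ on $\{a<t\}\subset\{m<t\}$, is slightly more direct than the paper's detour through $\lambda^{-1}\int_{\{v<t\}}v$, and your rescaling step spells out what the paper leaves implicit for general squares.
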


\begin{proof}
Let $v=\min(a,c)$ and $I(s)=\int_Q(s-v)_+^2$. The difference between the
two integrands in \eqref{eq:truncation} is
$(s-v)_+(\max(a,c)-s)$. Since $\max(a,c)\leq v/\lambda$,
\[
 0\leq\int_Q(s-v)_+(v-\lambda s)
 =\frac{1-\lambda}{2}sI'(s)-I(s),\qquad 0<s<1.
\]
Hence $sI'(s)\geq(P+1)I(s)$ and $I(s)/s^{P+1}$ is nondecreasing.
Since $I(1)\leq1$, continuity at $1$ implies $I(s)\leq s^{P+1}$.
For $0<t\leq1/2$ we have
\[
 t^2|\{v<t\}|\leq I(2t)\leq2^{P+1}t^{P+1}.
\]
For $t>1/2$ the estimate follows from $|Q|=1$.\\
To show the second inequality, since
\(\{a<t\}\subset\{v<t\}\), we have
\[
 \int_{Q\cap\{a<t\}}a\leq \frac1\lambda\int_{Q\cap\{v<t\}}v=\frac1\lambda\int_0^t
   |\{x\in Q:s<v(x)<t\}|\,ds\leq \frac{t}{\lambda}|\{v<t\}\cap Q|
 \leq C_\lambda t^P.
\]
\end{proof}

\section{A local adjoint estimate}\label{sec:local}
In this section, we transfer the periodic estimate to a local positive adjoint solution
using a positive periodic reference and a ratio Harnack estimate.
\begin{lemma}\label{lem:reference}
If $g$ is smooth and periodic on a square $Q$, with
$\lambda\leq g\leq\lambda^{-1}$, there exists a positive smooth periodic
$m$ such that
\begin{equation}\label{eq:reference}
 (gm)_{xx}+m_{yy}=0,\qquad \fint_Qm=1.
\end{equation}
Moreover, $|\{m<t\}\cap Q|\leq C_\lambda|Q|t^{P-1}$ for every $t>0$.
\end{lemma}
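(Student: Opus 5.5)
The plan is in two parts: first construct $m$ as the principal eigenfunction of the adjoint periodic operator, then deduce the tail bound from Proposition~\ref{prop:periodic}.

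\textbf{Construction of $m$.} Consider the nondivergence operator $Lw=g\,w_{xx}+w_{yy}$ on the torus $\R^2/Q$. It is uniformly elliptic and smooth. Its kernel consists of the constants, by the strong maximum principle. By Fredholm theory, the formal adjoint $L^*m=(gm)_{xx}+m_{yy}$ then has a one-dimensional kernel. To see that this kernel contains a positive function, I would use the Krein--Rutman theorem. For a large constant $k$, the resolvent $(k-L)^{-1}$ is a compact, strongly positive operator on $C(\R^2/Q)$; its adjoint therefore has a positive eigenmeasure, and the eigenvalue must be the principal one, $1/k$. Hence $L^*m=0$. Elliptic regularity, applied after writing $L^*m=0$ as $g m_{xx}+2g_xm_x+g_{xx}m+m_{yy}=0$, shows $m$ is smooth. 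The Harnack inequality, or the strong maximum principle for this equation, gives $m>0$. Normalizing, we get $\fint_Q m=1$.

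An alternative, more hands-on route is to minimize or solve $L^*m=0$ through the continuity method in $g$. Here $g\equiv1$ gives $m\equiv1$, and positivity is preserved because a first zero would contradict the strong maximum principle.

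\textbf{Tail estimate.} Set $a=m$ and $c=gm$. Then $c_{xx}+a_{yy}=0$, both functions are smooth, positive and periodic, $\fint_Q a=1$, and $c/a=g\in[\lambda,\lambda^{-1}]$. Proposition~\ref{prop:periodic}, in its rescaled version for a square of arbitrary side, gives
\[
|\{\min(a,c)<t\}\cap Q|\leq C_\lambda|Q|t^{P-1}.
\]
Finally $\{m<t\}=\{a<t\}\subset\{\min(a,c)<t\}$, which is the stated bound.

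\textbf{Expected main obstacle.} The only nontrivial step is producing a \emph{positive} smooth periodic adjoint solution; the tail estimate is then immediate. Fredholm theory gives a nonzero solution, but ruling out a sign change needs the Krein--Rutman or principal-eigenvalue argument. I would try the resolvent positivity argument first: if $m$ changed sign, pairing it against positive solutions of $(k-L)w=\phi$ with suitable test functions $\phi\ge0$ would give a contradiction.
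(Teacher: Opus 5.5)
Your proposal is correct, and the tail estimate is handled exactly as in the paper: apply Proposition~\ref{prop:periodic} to $(a,c)=(m,gm)$. The real difference is in how you produce a \emph{positive} element of $\ker L^*$.

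The paper computes $\operatorname{ind}L=0$ via the homotopy $g_t=(1-t)+tg$ to the Laplacian. From $\ker L=\mathbb R$ it concludes that $\ker L^*$ is one-dimensional. It then rules out a sign change with a short Fredholm-alternative argument: if the spanning element $m$ changed sign, one could pick a smooth $F>0$ with $\int_Q Fm=0$, solve $Lv=F$, and reach a contradiction at a maximum of $v$.

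You instead apply Krein--Rutman to the compact, strongly positive resolvent $(k-L)^{-1}$ on $C(\mathbb R^2/Q)$. The constant $1$ is a positive eigenvector with eigenvalue $1/k$, so $1/k$ is the principal eigenvalue. The positive adjoint eigenmeasure $\mu$ then satisfies $\int Lw\,d\mu=0$ for all smooth $w$, i.e.\ $L^*\mu=0$ in the sense of distributions.

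What each route buys:
\begin{itemize}
\item Your route never needs the index computation. In particular, your remark that $\ker L^*$ is one-dimensional ``by Fredholm theory'' silently uses $\operatorname{ind}L=0$, but existence via Krein--Rutman does not depend on it.
\item The price is heavier machinery and a regularity step that starts from a measure rather than a function. This step is hypoellipticity of the smooth-coefficient operator $L^*$, and you should state it as such before invoking Harnack or the strong minimum principle.
\item The paper's argument is more elementary and self-contained. Yours is the standard principal-eigenvalue (invariant-measure) viewpoint, and it gives uniqueness of $m$ through simplicity of the principal eigenvalue.
\end{itemize}

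Your sketched continuity-method alternative is not a shortcut. Openness in the parameter $t$ relies on $\ker L_t^*$ being one-dimensional and varying continuously, which again rests on the index computation.
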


\begin{proof}
Regard
\[
    L=g\partial_{xx}+\partial_{yy}
\]
as an operator from \(H^2(\mathbb T^2)\) to \(L^2(\mathbb T^2)\).
For
\[
    g_t=(1-t)+tg,
    \qquad
    L_t=g_t\partial_{xx}+\partial_{yy},
    \qquad 0\le t\le1,
\]
we have
\[
    \lambda\le g_t\le\lambda^{-1}.
\]
Thus \(L_t\) is a continuous family of elliptic Fredholm operators,
and its index is independent of \(t\). Thus 
$$
\operatorname{ind} L=\operatorname{ind}\Delta=0.
$$
Moreover, the maximum principle implies \(\ker L=\mathbb R\). Therefore
\[
    \dim\ker L^*
    =
    \operatorname{codim}\operatorname{Ran}L
    =1.
\]
Elliptic regularity shows that every element of \(\ker L^*\) is
smooth. Choose a nonzero \(m\in\ker L^*\). Since
\(\ker L^*\) is one-dimensional, \(m\) spans this kernel. The function
\(m\) cannot change sign: otherwise one could choose a smooth
function \(F>0\) such that
\[
 \int_Q Fm=0.
\]
The Fredholm alternative would then yield a periodic solution of
\(Lv=F>0\). At a maximum point \(x_0\) of \(v\), however,
\(Lv(x_0)\leq0\), a contradiction. Replacing \(m\) by
\(-m\) if necessary, we may assume that \(m\geq0\). To check strict positivity, expand
\[
 L^*m=g m_{xx}+m_{yy}+2g_xm_x+g_{xx}m=0.
\]
Then
\[
 \bigl(g\partial_{xx}+\partial_{yy}
       +2g_x\partial_x-(g_{xx})_-\bigr)m
 =-(g_{xx})_+m\leq0.
\]
The strong minimum principle implies $m>0$. Normalize its mean and apply
Proposition~\ref{prop:periodic} to $(m,gm)$.
\end{proof}

We next compare the periodic reference with a local positive adjoint
solution. The required comparison follows from Bauman's Harnack theorem
for normalized adjoint solutions.

\begin{lemma}\label{lem:ratio-harnack}
Let
\[
 L=g\partial_{xx}+\partial_{yy}
 \qquad\text{in }B_{8r},
\]
where $g$ is smooth and
\[
 \lambda\leq g\leq\lambda^{-1}.
\]
If $a,m>0$ are smooth in $B_{4r}$ and satisfy
\[
 L^*a=(ga)_{xx}+a_{yy}=0,
 \qquad
 L^*m=(gm)_{xx}+m_{yy}=0,
\]
then
\begin{equation}\label{eq:ratio-harnack}
 \sup_{B_r}\frac{a}{m}
 \leq C_\lambda\inf_{B_r}\frac{a}{m}.
\end{equation}
The constant depends only on ellipticity, not on derivatives or a
modulus of continuity of $g$.
\end{lemma}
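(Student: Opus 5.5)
We derive the ratio estimate \eqref{eq:ratio-harnack} from Bauman's comparison theorem for normalized adjoint solutions \cite[Theorem~4.4]{Bauman}. That theorem treats nondivergence operators $L=\sum a_{ij}\partial_{ij}$ with merely bounded measurable, uniformly elliptic coefficients. For two positive adjoint solutions $a,m$ in a ball it gives
\[
 \frac{a(X)}{m(X)}\le C\,\frac{a(Y)}{m(Y)}, \qquad X,Y\in B_r,
\]
after both are normalized against the Green's function (equivalently, against the adjoint measure of a fixed ball). The constant $C$ depends only on the dimension and the ellipticity constants. In our setting the coefficient matrix is $\diag(g,1)$, whose ellipticity is controlled by $\lambda$. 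Our smooth positive solutions of $(ga)_{xx}+a_{yy}=0$ are adjoint solutions in the classical sense, so the theorem applies directly.

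The order of steps is as follows. First, we check that $a$ and $m$ are admissible adjoint solutions in Bauman's sense. Since $g$ is smooth, $L^*a=0$ pointwise, and integrating against test functions gives $\int a\,L\varphi=0$ for every $\varphi\in C_c^\infty(B_{4r})$. This is exactly the weak adjoint equation Bauman uses.

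Second, we recall Bauman's key structural fact. A positive adjoint solution $w$ in $B_{4r}$ is comparable, up to a multiplicative constant depending only on ellipticity, to a fixed normalized weight. We take this weight to be the density $W$ of the $L$-harmonic (adjoint) measure, or the Green's function $G(\cdot,X_0)$ averaged over a fixed pole region. Concretely, Bauman's Theorem~4.4 states
\[
 C^{-1}\le \frac{w(X)}{w(X_0)}\,\frac{W(X_0)}{W(X)}\le C \qquad (X\in B_r)
\]
for a suitable interior point $X_0$. Applying this to $w=a$ and to $w=m$ and taking the quotient cancels $W$. The result is that $a/m$ is comparable to its value at $X_0$ throughout $B_r$, which is \eqref{eq:ratio-harnack}. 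The factor $8r$ versus $4r$ gives the room needed to place the poles and the normalizing ball away from $B_r$.

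Third, scaling. The statement is invariant under the dilation $(x,y)\mapsto (rx,ry)$, which preserves both the operator class and the bounds on $g$. It therefore suffices to treat $r=1$. The constant then depends only on $\lambda$, since Bauman's constants involve no regularity of $g$.

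The main obstacle is purely bibliographic and technical: matching our hypotheses to the precise normalization and domain conventions of Bauman's theorem. Adjoint solutions do not satisfy an ordinary Harnack inequality with ellipticity-only constants. Only the \emph{normalized} version holds, so the comparison must be done through a ratio that cancels the normalizing weight, and it cannot be split into separate Harnack inequalities for $a$ and $m$. If Bauman's statement is formulated for a Dirichlet problem on a ball, the fix is to restrict to $B_{4r}$. There $a$ and $m$ are smooth up to the boundary, so we can pass to the normalized Green representation on $B_{4r}$ and compare interior values with constants depending only on $\lambda$.
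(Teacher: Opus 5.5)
Your proposal is correct and follows essentially the same route as the paper, which takes the weight to be the Green function $G$ of $L$ in $B_{8r}$ with pole in $B_{8r}\setminus\overline{B_{6r}}$ (so $L^*G=0$ in $B_{6r}$), applies Bauman's Harnack inequality separately to the normalized adjoint solutions $a/G$ and $m/G$ in $B_{4r}$, and divides the two inequalities so that $G$ cancels. Your extra admissibility and scaling remarks are harmless but not needed.
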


\begin{proof}
Choose a pole in
\[
 B_{8r}\setminus\overline{B_{6r}},
\]
and let $G$ be the corresponding positive Green function of $L$ in
$B_{8r}$, viewed as a function of its second variable. Since the pole
lies outside $B_{6r}$,
\[
 L^*G=0\qquad\text{in }B_{6r}.
\]
Hence
\[
 \frac{a}{G}
 \qquad\text{and}\qquad
 \frac{m}{G}
\]
are positive normalized adjoint solutions in $B_{4r}$.

Applying Bauman's Harnack theorem
\cite[Theorem~4.4]{Bauman} to the positive normalized adjoint
solutions \(a/G\) and \(m/G\) in \(B_{4r}\), we obtain, for every
\(x,y\in B_r\),
\[
 \frac{a(x)}{G(x)}
 \leq C_\lambda\frac{a(y)}{G(y)},
 \qquad
 \frac{m(y)}{G(y)}
 \leq C_\lambda\frac{m(x)}{G(x)}.
\]
Therefore,
\[
 \frac{a(x)}{m(x)}
 =
 \frac{a(x)/G(x)}{m(x)/G(x)}
 \leq
 C_\lambda\frac{a(y)/G(y)}{m(y)/G(y)}
 =
 C_\lambda\frac{a(y)}{m(y)},
\]
where the constant has been enlarged. Taking the supremum in $x$ and
the infimum in $y$ proves \eqref{eq:ratio-harnack}.
\end{proof}

Combining the periodic lower-tail estimate with
Lemma~\ref{lem:ratio-harnack}, we obtain the corresponding local estimate.

\begin{proposition}\label{prop:local}
Suppose $a,g$ are smooth on $B_{8r}$,
\[
 a>0,\qquad (ga)_{xx}+a_{yy}=0,\qquad
 \lambda\leq g\leq\lambda^{-1}.
\]
With $A=\fint_{B_r}a$, one has, for every $t>0$,
\begin{equation}\label{eq:local-tail}
 \int_{B_r\cap\{a<t\}}a
 \leq C_\lambda |B_r|A^{1-P}t^P.
\end{equation}
\end{proposition}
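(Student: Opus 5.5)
We extend only the coefficient $g$ periodically, compare $a$ with a periodic reference adjoint solution, and transfer the periodic tail bound through the ratio Harnack estimate. By translation and scaling, $x\mapsto x/r$, we may assume $r=1$; the equation, the ellipticity bound and the normalized statement are invariant. Both sides of \eqref{eq:local-tail} also scale correctly under $a\mapsto \kappa a$, since the left side gains a factor $\kappa$ and $A^{1-P}t^P$ does as well. Hence we may normalize $A=\fint_{B_1}a=1$ and must show $\int_{B_1\cap\{a<t\}}a\le C_\lambda t^P$.

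\textbf{Step 1: periodic reference.} Take a square $Q'$ of side $16$ containing $B_8$. Define a smooth periodic $\tilde g$ on $Q'$ with $\lambda\le\tilde g\le\lambda^{-1}$ and $\tilde g=g$ on $B_{4}$. One way is to cut off $g$ toward the constant $1$ between $B_4$ and $B_6$ and then extend periodically; a convex combination preserves the bounds. Only $\tilde g=g$ on $B_4$ matters, since Lemma~\ref{lem:ratio-harnack} uses only ellipticity. Lemma~\ref{lem:reference} then gives a positive smooth periodic $m$ with $(\tilde gm)_{xx}+m_{yy}=0$ and $\fint_{Q'}m=1$. Applying Proposition~\ref{prop:periodic} to the pair $(m,\tilde g m)$, rescaled to $Q'$, gives
\[
 \int_{Q'\cap\{m<s\}}m\le C_\lambda s^P,\qquad |\{m<s\}\cap Q'|\le C_\lambda s^{P-1}.
\]

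\textbf{Step 2: Harnack comparison.} In $B_4$, both $a$ and $m$ solve $L^*=0$ with the same coefficient $g$. Lemma~\ref{lem:ratio-harnack} therefore gives $a/m\in[k,C_\lambda k]$ on $B_1$ for some $k>0$. To apply the lemma we need $L$ defined on $B_8$, which holds because $g$ is given on $B_8$; and $a,m$ are positive adjoint solutions in $B_4$.

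\textbf{Step 3: fixing $k$.} We need $k\approx1$, which requires a two-sided bound $\fint_{B_1}m\approx1$. The upper bound is immediate: $\fint_{B_1}m\le |Q'|/|B_1|$.

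The lower bound is the main obstacle. I would get it from the tail estimate itself: taking $s=s_0$ small, $|\{m<s_0\}\cap Q'|\le C_\lambda s_0^{P-1}<|B_1|/2$, so $m\ge s_0$ on half of $B_1$ and $\fint_{B_1}m\ge s_0/2$.

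Combining these with the normalization,
\[
 1=\fint_{B_1}a\in\Bigl[k\fint_{B_1} m,\ C_\lambda k\fint_{B_1} m\Bigr],
\]
so $k\approx_\lambda 1$.

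\textbf{Step 4: transfer.} On $B_1$ we have $m\le a/k$, so $\{a<t\}\subset\{m<t/k\}$, and $a\le C_\lambda km$. Hence
\[
 \int_{B_1\cap\{a<t\}}a\le C_\lambda k\int_{Q'\cap\{m<t/k\}}m\le C_\lambda k(t/k)^P\le C_\lambda t^P.
\]
Undoing the normalizations restores the factor $|B_r|A^{1-P}$, which proves \eqref{eq:local-tail}.
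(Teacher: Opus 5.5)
Your proposal is correct and follows essentially the paper's argument: cut off $g$ toward $1$ and periodize, take the periodic reference $m$ from Lemma~\ref{lem:reference}, and use Lemma~\ref{lem:ratio-harnack} together with $\int_{B_r}m\le|Q_r|$ to get $a\ge c_\lambda A\,m$ on $B_r$, then transfer the periodic tail. The differences are minor: the paper bounds $\int_{B_r\cap\{a<t\}}a\le t\,|\{a<t\}\cap B_r|$ and uses only the measure tail of $m$. Also, your Step~4 uses only $k^{1-P}\le C_\lambda$, i.e.\ $k\ge c_\lambda$, so the lower bound on $\fint_{B_1}m$ that you call the main obstacle in Step~3 is correct but never actually needed.
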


\begin{proof}
After translation, the balls are centered at zero. Choose \(0\leq\zeta\leq1\) such that
\[
 \zeta=1\quad\text{on }B_{4r},
 \qquad
 \supp\zeta\subset B_{6r},
\]
and define on \(Q_r=(-8r,8r)^2\)
\[
 \widehat g=
 \begin{cases}
  1+\zeta(g-1),&\text{in }B_{6r},\\
  1,&\text{in }Q_r\setminus B_{6r}.
 \end{cases}
\]
Then \(\widehat g\equiv1\) near \(\partial Q_r\), so it extends smoothly
and periodically. Moreover,
\[
 \lambda\leq\widehat g\leq\lambda^{-1},
 \qquad
 \widehat g=g\quad\text{on }B_{4r}.
\] Let $m$ be the reference in
Lemma~\ref{lem:reference}. On $B_{4r}$, $a$ and $m$ solve the same adjoint
equation. By \eqref{eq:ratio-harnack},
\[
 A|B_r|=\int_{B_r}a
 \leq C_\lambda\inf_{B_r}(a/m)\int_{B_r}m
 \leq C_\lambda\inf_{B_r}(a/m)|Q_r|.
\]
Since $|Q_r|/|B_r|$ is fixed, $a\geq c_\lambda A m$ on $B_r$. Thus
\[
 |\{a<t\}\cap B_r|
 \leq |\{m<C_\lambda t/A\}\cap Q_r|
 \leq C_\lambda|B_r|(t/A)^{P-1}.
\]
Since \(a<t\) on this set,
\[
 \int_{B_r\cap\{a<t\}}a
 \leq t\,|\{a<t\}\cap B_r|,
\]
and \eqref{eq:local-tail} follows.
\end{proof}

\section{Proof of Theorem~\ref{thm:main}}\label{sec:mainproof}
We first prove the estimate for smooth strictly convex solutions, and then pass to Alexandrov solutions and remove the normalization on
compact sections.
\subsection{The smooth case}
For a smooth solution with $D^2u>0$, consider the partial Legendre transform
\[
 (\xi,\eta)=\Phi(x_1,x_2)=(u_1(x_1,x_2),x_2),\qquad
 w(\xi,\eta)=x_1\xi-u(x_1,x_2).
\]
The map is one-to-one on a convex domain, because $u_1$ is strictly
increasing on each horizontal segment. Its positive Jacobian is $u_{11}$.
Direct differentiation shows that
\begin{equation}\label{eq:legendre}
 a=w_{\xi\xi}=\frac1{u_{11}},\qquad
 c=-w_{\eta\eta}=\frac{\det D^2u}{u_{11}},\qquad
 c_{\xi\xi}+a_{\eta\eta}=0,\qquad dx=a\,d\xi\,d\eta.
\end{equation}
Thus $g=c/a$ remains in $[\lambda,\lambda^{-1}]$. For every ball \(B_r\) in the transformed coordinates such that
\(B_{8r}\subset\Phi(\Omega)\),
Proposition~\ref{prop:local} implies
\begin{equation}\label{eq:physical-tail}
 |\{u_{11}>T\}\cap\Phi^{-1}(B_r)|
 =\int_{B_r\cap\{a<T^{-1}\}}a
 \leq C_\lambda|B_r|A^{1-P}T^{-P},
 \qquad A=\fint_{B_r}a.
\end{equation}

It remains to choose the transformed balls uniformly and to bound the
average \(A\) in \eqref{eq:physical-tail} from below. For the normalized
class in Theorem~\ref{thm:main}, quadratic comparison and the Alexandrov
maximum principle imply
\[
 \frac{\sqrt\lambda}{2}\le h\le\frac{2}{\sqrt\lambda},
 \qquad
 |u(x)|^2\le C_\lambda\dist(x,\partial\Omega).
\]
Hence every fixed lower sublevel stays a uniform distance from
\(\partial\Omega\). Moreover, the normalized family is compact under local uniform convergence.
Indeed, every sequence has a locally uniformly convergent subsequence,
and Alexandrov stability preserves the measure bounds. The planar
strict convexity and interior \(C^{1,\alpha}\) theory imply that the
limit is \(C^1\) on fixed lower sublevels. Uniform convergence of
convex functions to a \(C^1\) limit implies uniform convergence of
their gradients on compact subsets. A contradiction argument
therefore yields a common modulus of continuity for \(Du\) on fixed
lower sublevels; see
\cite[Sections~2.1 and~2.3--2.4]{DFSurvey}.

Set
\[
 \theta_1=\frac{1+\theta}{2}.
\]
Since \(\Phi\) is a diffeomorphism near
\(\overline{S_{\theta_1}}\),
\[
 \partial\Phi(S_{\theta_1})=\Phi(\partial S_{\theta_1}),
 \qquad
 \operatorname{diam}\Phi(S_{\theta_1})\le C(\lambda,\theta).
\]
There is \(r=r(\lambda,\theta)>0\) such that
\begin{equation}\label{eq:separation}
 \dist\bigl(\Phi(S_\theta),
            \partial\Phi(S_{\theta_1})\bigr)\ge 8r.
\end{equation}
Otherwise, compactness would produce limit points
\(x\in\overline{S_\theta}\) and
\(y\in\partial S_{\theta_1}\) satisfying
\[
 x_2=y_2,\qquad u_1(x)=u_1(y),
\]
while
\[
 u(y)-u(x)
 \ge(\theta_1-\theta)h
 \ge\frac{\sqrt\lambda}{2}(\theta_1-\theta)>0.
\]
The restriction of \(u\) to the horizontal segment joining \(x\) and
\(y\) is convex. Its derivative has the same value at both endpoints,
so it is constant on the segment. This contradicts planar strict
convexity and proves \eqref{eq:separation}.

By the common modulus of continuity of \(Du\) on
\(\overline{S_{\theta_1}}\), choose
\(\rho=\rho(\lambda,\theta)>0\) such that
\[
 |Du(p)-Du(q)|+|p-q|<r
\]
whenever
\[
 p,q\in\overline{S_{\theta_1}},
 \qquad |p-q|<\rho.
\]

Fix \(x\in S_\theta\) and set \(z=\Phi(x)\). We first claim that
\[
 B_\rho(x)\subset S_{\theta_1}.
\]
Otherwise, since \(S_{\theta_1}\) is convex and
\(x\in S_{\theta_1}\), there is
\[
 y\in\partial S_{\theta_1}\cap B_\rho(x).
\]
Since
\[
 \Phi(y)\in\partial\Phi(S_{\theta_1}),
\]
equation \eqref{eq:separation} implies
\[
 8r
 \le |\Phi(x)-\Phi(y)|
 \le |Du(x)-Du(y)|+|x-y|
 <r,
\]
a contradiction.

For every \(y\in B_\rho(x)\), the first inclusion shows that
\(x,y\in S_{\theta_1}\). Hence
\[
 |\Phi(y)-\Phi(x)|
 \le |Du(y)-Du(x)|+|y-x|
 <r,
\]
and therefore
\[
 \Phi(B_\rho(x))\subset B_r(z).
\]
By \eqref{eq:legendre},
\[
 \int_{B_r(z)}a\,d\xi\,d\eta
 =|\Phi^{-1}(B_r(z))|
 \ge |B_\rho(x)|.
\]
Consequently,
\[
 A=\fint_{B_r(z)}a
 \ge \frac{|B_\rho|}{|B_r|}
 \ge c(\lambda,\theta)>0.
\]

Since \(\Phi(S_\theta)\) has uniformly bounded diameter, it is covered
by at most \(C(\lambda,\theta)\) such balls. Applying \eqref{eq:physical-tail} to each ball in this covering yields
\[
 |\{u_{11}>T\}\cap S_\theta|
 \le C(\lambda,\theta)T^{-P}.
\]
Interchanging \(x_1\) and \(x_2\) yields the same estimate for
\(u_{22}\). Finally,
\[
 \{\Delta u>T\}
 \subset
 \{u_{11}>T/2\}\cup\{u_{22}>T/2\},
\]
because \(D^2u\ge0\). This proves the smooth estimate.

\subsection{Passing to Alexandrov solutions}

We first treat the normalized zero-boundary case. Write
\[
    \mu_u=f\,dx,
    \qquad
    \lambda\le f\le\lambda^{-1}.
\]
Extend \(f\) by \(1\) outside \(\Omega\) and mollify it. The resulting
smooth functions \(f_j\) remain in \([\lambda,\lambda^{-1}]\).
Choose smooth uniformly convex domains \(\Omega_j\) converging to
\(\Omega\) in Hausdorff distance, with
\[
    B_{1/2}\subset\Omega_j\subset B_3,
\]
and let \(u_j\) solve
\[
    \det D^2u_j=f_j\quad\text{in }\Omega_j,
    \qquad
    u_j=0\quad\text{on }\partial\Omega_j.
\]
Extend \(u_j\) by zero outside \(\Omega_j\). Stability of Alexandrov
solutions then implies
\[
    u_j\longrightarrow u\quad\text{uniformly},
    \qquad
    h_j:=-\min_{\Omega_j}u_j\longrightarrow h;
\]
see \cite[Theorems~2.8 and~2.11]{DFSurvey}.

Set
\[
    \theta_1=\frac{1+\theta}{2}.
\]
Uniform convergence provides an open set
\[
    \overline{S_\theta}\subset U\Subset\Omega
\]
such that, for all large \(j\),
\[
    U\subset\{u_j<-(1-\theta_1)h_j\}.
\]
Since the domains have fixed inner and outer ball bounds, the smooth
estimate applies uniformly on \(U\):
\[
    |\{\Delta u_j>T\}\cap U|
    \le C(\lambda,\theta)T^{-P}.
\]

To pass the endpoint estimate to the limit, we rewrite the weak tail
as a uniform estimate for the positive measures
\(\Delta u_j\,dx\). Let
\[
    \nu_j=\Delta u_j\,dx.
\]
For every measurable \(E\subset U\), we have
\begin{equation}\label{eq:set-bound}
    \nu_j(E)\le
      \int_0^\infty\min\{|E|,CT^{-P}\}\,dT=\frac{P}{P-1}C^{1/P}|E|^{1-1/P}.
\end{equation}
Local uniform convergence implies
\[
    \int\varphi\,d\nu_j
    =
    \int u_j\Delta\varphi\,dx
    \longrightarrow
    \int u\Delta\varphi\,dx
\]
for every \(\varphi\in C_c^\infty(U)\). Hence
\[
    \nu_j\rightharpoonup\nu:=\Delta u
\]
locally as positive Radon measures. For every open set \(O\Subset U\),
\[
 \nu(O)
 \leq\liminf_{j\to\infty}\nu_j(O)
 \leq C|O|^{1-1/P}.
\]
Outer
regularity then shows
\begin{equation}\label{eq:limit-set-bound}
 \nu(E)\le C|E|^{1-1/P}
\end{equation}
for every measurable \(E\Subset U\). In particular,
\[
    \nu\ll dx.
\]

The distributional Hessian of a convex function is a positive
semidefinite matrix-valued measure. Thus
\[
    |D_{ij}u|
    \le \frac12\bigl(D_{ii}u+D_{jj}u\bigr)
    \le \Delta u
\]
as measures, and therefore
\[
    u\in W^{2,1}_{\mathrm{loc}}(U).
\]
Identifying \(\Delta u\) with its density, apply
\eqref{eq:limit-set-bound} to
\[
 E_T=\{\Delta u>T\}\cap S_\theta.
\]
Then
\[
    T|E_T|
    \le \int_{E_T}\Delta u\,dx
    \le C|E_T|^{1-1/P}.
\]
Consequently,
\[
    |\{\Delta u>T\}\cap S_\theta|
    \le C(\lambda,\theta)T^{-P}.
\]

It remains to remove the normalized-domain and zero-boundary
assumptions. For a general local solution, choose a compact section
\[
    \mathcal S=\{u<\ell\}\Subset\Omega,
\]
where \(\ell\) is affine. Such sections form a neighborhood basis by
planar strict convexity. Let
\[
    y=Ax+b
\]
normalize \(\mathcal S\), and define
\[
    \widetilde u(y)
    =
    |\det A|\,
    (u-\ell)\bigl(A^{-1}(y-b)\bigr).
\]
For every Borel set \(E\),
\[
    \mu_{\widetilde u}(E)
    =
    |\det A|\,
    \mu_u\bigl(A^{-1}(E-b)\bigr),
    \qquad
    |A^{-1}(E-b)|
    =
    |\det A|^{-1}|E|.
\]
Hence
\[
    \lambda\,dy
    \le \mu_{\widetilde u}
    \le \lambda^{-1}\,dy.
\]
At almost every point of twice differentiability,
\[
 D_y^2\widetilde u(y)
 =
 |\det A|\,
 A^{-T}D_x^2u(x)A^{-1},
 \qquad
 x=A^{-1}(y-b).
\]
Equivalently,
\[
 D_x^2u(x)
 =
 |\det A|^{-1}
 A^T D_y^2\widetilde u(y)A.
\]
Since \(A\) is fixed on each section, this linear change of variables
and its constant Jacobian preserve local weak \(L^P\) membership, up
to a constant depending on \(A\). A smaller section corresponds to a
fixed lower sublevel of the normalized section. Since such sections
form a neighborhood basis, the normalized estimate transfers to every
compact subset of \(\Omega\).

For \(1\leq p<P\), the layer-cake formula and the local weak
\(L^P\) estimate imply, for every \(K\Subset\Omega\),
\[
 \int_K(\Delta u)^p\,dx
 =
 p\int_0^\infty
 T^{p-1}|\{\Delta u>T\}\cap K|\,dT
 <\infty.
\]
Thus
\[
 u\in W^{2,p}_{\mathrm{loc}}(\Omega)
 \qquad\text{for every }1\leq p<P,
\]
and Theorem~\ref{thm:main} follows.

We next derive the logarithmic consequence of the weak endpoint
estimate.
\begin{proof}[Proof of Corollary~\ref{cor:log}]
Integrating \eqref{eq:weak} proves
\eqref{eq:logarithmic}: for large \(T\), the resulting integrand is
bounded by
\[
 \frac{C}{T(\log T)^\gamma}.
\]
The lower bound in \eqref{eq:two-sided}, proved in
Section~\ref{sec:sharpness}, shows that the condition
\(\gamma>1\) is optimal.
\end{proof}

\section{Sharpness}\label{sec:sharpness}
In this section, we prove Theorem~\ref{thm:sharp} by successive
rank-one splittings, following Faraco's iterative splitting method
\cite{Faraco2003} and its adaptation to symmetric Hessians by Astala,
Faraco and Sz\'ekelyhidi \cite{AFS2008}. We include the details needed to preserve
the determinant bounds and obtain the tail at every sufficiently large level.

\subsection{Two elementary matrix splittings}
For $t\geq1$, let
$$
A_t=\diag(t,t^{-1}).
$$
Choose
$\lambda<\tau<1$ and $1<s<\min\{2,\tau^{-1}\}$. First split $A_t$
in the first diagonal entry between
\[
 L_t=\diag(\tau t,t^{-1}),\qquad B_t=\diag(st,t^{-1}),
\]
and then split $B_t$ in the second diagonal entry between
\[
 U_t=\diag(st,(\tau st)^{-1}),\qquad A_{st}.
\]
Each difference has rank one. The combined identity is
\begin{equation}\label{eq:split}
 A_t=\ell L_t+\tau\ell U_t+\vartheta A_{st},\qquad
 \ell=\frac{s-1}{s-\tau},\qquad
 \vartheta=\frac{1-\tau s}{s-\tau}.
\end{equation}
The weights are positive and sum to one. Both matrix segments are positive
definite, with determinant in $[\tau,\tau^{-1}]$, and hence lie strictly
inside $\{M>0:\lambda<\det M<\lambda^{-1}\}$.

\begin{lemma}\label{lem:splitting-step}
Let
\[
 t\geq1,\qquad
 \lambda<\tau<1,\qquad
 1<s<\min\{2,\tau^{-1}\},
\]
and let \(\vartheta\) be defined in \eqref{eq:split}. Suppose that
\(E\) is a finite union of open squares with pairwise disjoint
closures, \(u\) is smooth near \(\overline E\), and
\[
 D^2u=A_t\qquad\text{in }E.
\]
For every \(0<\varepsilon<1\) and \(\varepsilon_0>0\), there exist
\[
 \varphi\in C_c^\infty(E)
\]
and a finite union \(E'\Subset E\) of open squares with pairwise
disjoint closures such that, with
\[
 u_+=u+\varphi,
\]
one has
\[
 \|\varphi\|_{C^1(\mathbb R^2)}<\varepsilon_0,
\]
\[
 D^2u_+>0,\qquad
 \lambda<\det D^2u_+<\lambda^{-1}
 \quad\text{in }E,
\]
\[
 D^2u_+=A_{st}\quad\text{on }E',
 \qquad
 (1-\varepsilon)\vartheta|E|
 \leq |E'|
 \leq(1+\varepsilon)\vartheta|E|,
\]
and
\[
 \frac{\lambda}{2}t
 \leq\tr D^2u_+
 \leq C_\lambda st
 \qquad\text{in }E.
\]
\end{lemma}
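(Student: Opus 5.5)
The construction realizes the two rank-one splittings in \eqref{eq:split} one after the other. Each is done by a laminate-type oscillation of the Hessian in one coordinate direction, localized inside each square of \(E\) by cutoffs. Since everything is local, it suffices to treat one open square \(R\) of \(E\) and add the resulting perturbations, which have disjoint supports.

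\textbf{First splitting (direction \(e_1\)).} Choose a small period \(\delta\) and a smooth periodic profile \(h\) with \(h''\in\{(\tau-1)t,(s-1)t\}\) except on transition layers of relative measure \(\le\eta\). Put the value \((\tau-1)t\) on a fraction \(\frac{s-1}{s-\tau}\) of each period and \((s-1)t\) on the rest, so that \(h''\) has mean zero. Let \(\phi_1(x)=\delta^2 h(x_1/\delta)\zeta(x)\), with \(\zeta\in C_c^\infty(R)\) equal to \(1\) outside a boundary strip of relative area \(\le\eta\). Then
\[
 D^2\phi_1=h''(x_1/\delta)\,e_1\otimes e_1\,\zeta+O(\delta\,\|\zeta\|_{C^2}),
\]
and \(\|\phi_1\|_{C^1}=O(\delta)\). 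On the bulk, \(D^2(u+\phi_1)\) runs through the segment \([L_t,B_t]\): it equals \(L_t\) or \(B_t\) on vertical strips, and lies in the segment on the layers. On the cutoff strip it lies in the segment \(\{A_t+\sigma e_1\otimes e_1\}\) with \(\sigma\) between \((\tau-1)t\) and \((s-1)t\), up to \(O(\delta)\). Since all these matrices are diagonal with entries in \([\tau t,st]\times\{t^{-1}\}\), their determinants lie in \([\tau,s]\subset[\tau,\tau^{-1}]\). Choosing \(\delta\) small makes the \(O(\delta)\) error preserve positivity and the strict bounds \(\lambda<\det<\lambda^{-1}\); this works because \(\tau>\lambda\) gives room, and the error is small relative to the smallest eigenvalue \(t^{-1}\), with \(t\) fixed.

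\textbf{Second splitting (direction \(e_2\)).} Inside each \(B_t\)-strip, cover all but relative area \(\eta\) by small squares and repeat the construction there in direction \(e_2\), splitting \(B_t\) into \(U_t\) and \(A_{st}\) with the proportions dictated by \eqref{eq:split}. The second diagonal entry ranges over \([(\tau st)^{-1},t^{-1}]\), so the determinant lies in \([\tau,s]\) again. Let \(E'\) be the union of the small squares, inside the \(A_{st}\) sub-strips, where both cutoffs equal one and no transition occurs. On \(E'\) we have exactly \(D^2u_+=A_{st}\). Its area is \(\vartheta|E|\) up to a relative error \(O(\eta)\), so taking \(\eta\) small relative to \(\varepsilon\) gives \((1\pm\varepsilon)\vartheta|E|\). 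Finally we shrink the periods until \(\|\varphi\|_{C^1}<\varepsilon_0\).

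\textbf{Trace bounds.} Everywhere the Hessian is, up to small error, diagonal with first entry in \([\tau t,st]\) and second entry at most \(t^{-1}\le1\le t\). Hence \(\tr D^2u_+\ge\tau t-o(1)\ge\frac\lambda2 t\) and \(\tr\le st+1+o(1)\le C_\lambda st\).

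\textbf{Main obstacle.} The delicate point is the cutoff region. There the Hessian picks up error terms \(\delta h'\,\partial\zeta\) and \(\delta^2 h\,\partial^2\zeta\) that are not rank-one along the segment. These must stay small compared with the smallest eigenvalue, of order \((\tau st)^{-1}\), uniformly. We handle this by fixing the cutoffs first, with \(\|\zeta\|_{C^2}\) depending on \(\eta\) and the square sizes, and only then sending \(\delta\to0\). Doing so keeps \(D^2u_+\) in a compact subset of the open set \(\{M>0:\lambda<\det M<\lambda^{-1}\}\). The choice \(\tau>\lambda\), \(s<\tau^{-1}\) guarantees the segments lie strictly inside this set.
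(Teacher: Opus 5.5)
Your proposal is correct and follows essentially the same route as the paper. You use a cutoff laminate $\delta^2\zeta h(x_i/\delta)$ to realize $A_t=\ell L_t+(1-\ell)B_t$ in direction $e_1$, then a second laminate in direction $e_2$ on small squares inside the $B_t$-phase, fixing the cutoffs before sending the frequency to infinity so the Hessian stays in a compact subset of $\{M>0:\lambda<\det M<\lambda^{-1}\}$. One harmless slip: along $[U_t,A_{st}]$ the second diagonal entry ranges over $[(st)^{-1},(\tau st)^{-1}]$, not $[(\tau st)^{-1},t^{-1}]$. Hence the determinant there lies in $[1,\tau^{-1}]$ rather than $[\tau,s]$, and the second entry can exceed $t^{-1}$; both conclusions survive because $[1,\tau^{-1}]\subset(\lambda,\lambda^{-1})$ and $(\tau st)^{-1}<\lambda^{-1}\le\lambda^{-1}st$.
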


\begin{proof}
We first realize one rank-one splitting. Suppose
\[
 D^2u=M
\]
on an open square \(\mathcal Q\), and consider a splitting
\[
 M=\alpha M^-+(1-\alpha)M^+,
 \qquad
 M^\pm=M+b_\pm e_i\otimes e_i,
\]
where
\[
 b_-<0<b_+,
 \qquad
 \alpha b_-+(1-\alpha)b_+=0.
\]
Thus only the \(i\)-th diagonal entry changes.

Let \(q\) be the \(1\)-periodic step function that equals \(b_-\)
on a proportion \(\alpha\) of each period and \(b_+\) on the
remaining proportion. Then
\[
 \int_0^1q=0.
\]
After periodic convolution with a nonnegative smooth kernel of small
support, \(q\) remains in \([b_-,b_+]\), still has zero mean, and
equals \(b_-\) or \(b_+\) outside transition intervals of arbitrarily
small total length. Two periodic integrations produce a smooth
\(1\)-periodic function \(h\) satisfying
\[
 h''=q.
\]
The constant in the first integration is chosen so that
\(\int_0^1h'=0\).

Choose \(0\leq\eta\leq1\), compactly supported in \(\mathcal Q\) and equal to
one outside a thin boundary strip. For large \(j\), set
\[
 \varphi_j(x)=j^{-2}\eta(x)h(jx_i).
\]
A direct calculation shows that $\|\varphi_j\|_{C^1}=O(j^{-1})$, and
\begin{equation}\label{eq:oscillation}
 D^2\varphi_j
 =\eta h''(jx_i)e_i\otimes e_i+j^{-1}h'(jx_i)
   \bigl(D\eta\otimes e_i+e_i\otimes D\eta\bigr)
   +j^{-2}h(jx_i)D^2\eta.
\end{equation}
In particular,
\[
 D^2\varphi_j
 =
 \eta h''(jx_i)e_i\otimes e_i+O(j^{-1})
\]
uniformly once \(\eta\) is fixed.

Since
\[
 0\leq\eta\leq1,
 \qquad
 b_-\leq h''\leq b_+,
\]
the matrix
\[
 M+\eta h''(jx_i)e_i\otimes e_i
\]
lies on the segment joining \(M^-\) and \(M^+\). For either splitting
used below, this segment is compactly contained in
\[
 \{N=N^T>0:\lambda<\det N<\lambda^{-1}\}.
\]
Hence, for sufficiently large \(j\),
\[
 D^2(u+\varphi_j)>0,
 \qquad
 \lambda<\det D^2(u+\varphi_j)<\lambda^{-1}
 \quad\text{in }\mathcal Q.
\]
Where \(\eta=1\) and \(h''=b_\pm\), the new Hessian is exactly
\(M^\pm\). Choose the boundary and transition regions with arbitrarily small
total area, and then take \(j\) large. Outside these exceptional
regions, the new Hessian equals \(M^-\) and \(M^+\) on sets whose
relative areas are arbitrarily close to \(\alpha\) and \(1-\alpha\),
respectively. At the same time,
\[
 \|\varphi_j\|_{C^1}\longrightarrow0.
\]

Apply this construction first to
\[
 A_t=\ell L_t+(1-\ell)B_t.
\]
The regions where the Hessian equals \(B_t\) contain, up to an arbitrarily small area
loss, finitely many squares with pairwise disjoint closures. Inside
those squares apply the second splitting
\[
 B_t
 =
 \frac{\tau(s-1)}{1-\tau}\,U_t
 +
 \frac{1-\tau s}{1-\tau}\,A_{st}.
\]
The total proportion of the \(A_{st}\)-phase is
\[
 (1-\ell)\frac{1-\tau s}{1-\tau}
 =
 \frac{1-\tau s}{s-\tau}
 =
 \vartheta.
\]
After the two splittings, the regions where the Hessian equals
\(L_t\) or \(U_t\) are left unchanged, while only the
\(A_{st}\)-regions are retained for the next stage.

Perform the construction independently in the finitely many squares
forming \(E\), and sum the resulting perturbations. By increasing the
frequencies, the sum \(\varphi\) may be chosen to satisfy
\[
 \varphi\in C_c^\infty(E),
 \qquad
 \|\varphi\|_{C^1(\mathbb R^2)}<\varepsilon_0.
\]
Up to an arbitrarily small area loss, the regions where
\(D^2u_+=A_{st}\) contain a finite union \(E'\Subset E\) of squares
with pairwise disjoint closures. The omitted boundary and transition
regions may be chosen so that
\begin{equation}\label{eq:step-area}
 D^2u_+=A_{st}\quad\text{on }E',
 \qquad
 (1-\varepsilon)\vartheta|E|
 \leq |E'|
 \leq(1+\varepsilon)\vartheta|E|.
\end{equation}
The perturbation vanishes near the boundary of every parent square,
so the local constructions fit together smoothly.

Every matrix \(N\) on the two splitting segments is positive
definite and satisfies
\[
 \tau\leq\det N\leq\tau^{-1},
 \qquad
 N_{11}\geq\tau t>\lambda t,
 \qquad
 \tr N\leq C_\lambda st.
\]
After reducing the transition errors,
\[
 D^2u_+>0,\qquad
 \lambda<\det D^2u_+<\lambda^{-1}
 \quad\text{in }E,
\]
and
\[
 \frac{\lambda}{2}t
 \leq\tr D^2u_+
 \leq C_\lambda st
 \qquad\text{in }E.
\]
This proves the lemma.
\end{proof}

\subsection{Choosing the scales and taking the limit}
If \(\tau>\lambda\) were fixed, the surviving area would decay with
the larger exponent
\[
 \frac{1+\tau}{1-\tau}>P.
\]
To reach the critical exponent \(P\), we let \(\tau\) approach
\(\lambda\) while the scale grows by factors \(s_k\downarrow1\).

Take an integer $k_0$, sufficiently large depending only on $\lambda$,
and for $k\geq k_0$ define
\begin{equation}\label{eq:scales}
 t_k=\frac{k}{k_0},\qquad s_k=\frac{k+1}{k},\qquad
 \tau_k=\lambda+\frac{1-\lambda}{k^2},\qquad
 \vartheta_k=\frac{1-\tau_ks_k}{s_k-\tau_k}.
\end{equation}
These parameters satisfy the conditions above and ensure the summable error
\begin{equation}\label{eq:product-asymptotic}
 -\log\vartheta_k=P\log s_k+O_\lambda(k^{-3}).
\end{equation}
For completeness, the exact derivative identity
\[
 -s\frac{d}{ds}\log\frac{1-\tau s}{s-\tau}
 -\frac{1+\tau}{1-\tau}
 =\frac{\tau(1+\tau)(s-1)^2}
 {(s-\tau)(1-\tau)(1-\tau s)}
\]
shows that the error relative to
$\frac{1+\tau}{1-\tau}\log s$ is $O_\lambda((s-1)^3)$ near
$(s,\tau)=(1,\lambda)$. Since
$\tau_k-\lambda=O_\lambda(k^{-2})$ and $\log s_k=O(k^{-1})$,
\eqref{eq:product-asymptotic} follows. Summing it and using
$\prod_{j=k_0}^{k-1}s_j=t_k$, we obtain
\begin{equation}\label{eq:product}
 c_\lambda t_k^{-P}\leq\prod_{j=k_0}^{k-1}\vartheta_j
 \leq C_\lambda t_k^{-P}.
\end{equation}
The quantity \(t_k\) is the Hessian scale after \(k-k_0\) continuation
steps, while
\[
 \prod_{j=k_0}^{k-1}\vartheta_j
\]
is the area fraction predicted by the exact matrix splittings.
Equation~\eqref{eq:product} identifies their critical relation.

Start with $u_{k_0}=|x|^2/2$ and $E_{k_0}=Q$. At stage $k$, apply
Lemma~\ref{lem:splitting-step} with \(E=E_k\), the parameters in
\eqref{eq:scales}, relative error $2^{-k-2}$, and
\(C^1\) tolerances whose sum is as small as prescribed. Then
\[
 u_{k+1}=u_k+\varphi_k,\qquad \supp\varphi_k\Subset E_k,
 \qquad E_{k+1}\Subset E_k,\qquad D^2u_{k+1}=A_{t_{k+1}} \hbox{on }E_{k+1}.
\]
Because $\sum_{k\geq k_0}2^{-k-2}<\infty$, \eqref{eq:step-area} and
\eqref{eq:product} imply
\begin{equation}\label{eq:marked-area}
 c_\lambda|Q|t_k^{-P}\leq |E_k|\leq C_\lambda|Q|t_k^{-P}.
\end{equation}
The functions $u_k$ converge in $C^1$ to a convex function $u$, equal to
$|x|^2/2$ outside $Q$, with the prescribed $C^1$ distance from this
quadratic polynomial. The compact containments imply that
$E_\infty=\bigcap_kE_k=\bigcap_k\overline{E_k}$ is compact,
and \eqref{eq:marked-area} shows that it has area zero.
Outside this set, all sufficiently late perturbations vanish on a
neighborhood of each point, so $u$ is smooth there.

We next verify that the limit remains in the admissible
Monge--Ampère class and that its distributional Hessian has no
singular part. Local uniform convergence of convex functions preserves the full
Monge--Amp\`ere measure bounds \cite[Proposition~2.4]{DFSurvey}.
Hence $u$ satisfies \eqref{eq:measure}, and planar strict convexity
implies that $u$ is strictly convex. By Theorem~\ref{thm:main}, the distributional Hessian of \(u\) has no
singular part. Its density therefore agrees almost everywhere with
the classical Hessian on \(\R^2\setminus E_\infty\).

The trace bounds at each stage imply, for the pointwise limiting Hessian
outside $E_\infty$,
\begin{equation}\label{eq:trace-layers}
 \Delta u\geq\frac\lambda2t_k\quad\hbox{almost everywhere on }E_k,
 \qquad \Delta u\leq C_\lambda t_k\quad\hbox{on }Q\setminus E_k.
\end{equation}
Indeed, if a point belongs to \(E_k\) and lies in
\(E_j\setminus E_{j+1}\) for some \(j\geq k\), then its final Hessian
satisfies the lower bound at the larger scale \(t_j\geq t_k\).
Points outside \(E_k\) are not modified after stage \(k\). For the upper tail, choose $k$ so that
$C_\lambda t_k\leq T<C_\lambda t_{k+1}$; then
$\{\Delta u>T\}\cap Q\subset E_k$ up to a null set.
For the lower tail, choose the first $k$ such that
$\lambda t_k/2>T$; then $E_k\subset\{\Delta u>T\}$ up to a null set.
Since $t_{k+1}/t_k\leq2$, \eqref{eq:marked-area} yields
\eqref{eq:two-sided} in both cases.

Finally, the lower bound in \eqref{eq:two-sided} implies that
\[
 \int_{T_\lambda}^\infty
 \bigl[T^P|\{\Delta u>T\}\cap Q|\bigr]^{b/P}\,\frac{dT}{T}
\]
diverges for every finite $b>0$. Up to a constant, this integral is the \(b\)-th power of the
\(L^{P,b}(Q)\) quasi-norm of \(\Delta u\). Since the Hessian norm and trace
are comparable, Theorem~\ref{thm:sharp} follows. The same two-sided tail also shows that
\[
 \int_Q\frac{(\Delta u)^P}{[\log(e+\Delta u)]^\gamma}\,dx<\infty
 \quad\Longleftrightarrow\quad \gamma>1.
\]

\section{Applications}\label{sec:minty}
We first deduce Corollary~\ref{cor:minty} from
Theorems~\ref{thm:main} and~\ref{thm:sharp}. The essential
point is that the Jacobian of the Minty change of variables
is comparable to the Hessian trace.

\begin{lemma}\label{lem:minty-identities}
Under the assumptions in Corollary~\ref{cor:minty}, \(S,V,g\) are defined by
\eqref{eq:minty-definition}. Then \(S:\Omega\to V\) is a
homeomorphism, and both \(S^{-1}\) and \(g\) are \(1\)-Lipschitz.
At almost every \(x\in\Omega\), set
\[
 H=D^2u(x),\qquad \sigma=\tr H,\qquad
 d=\det H,\qquad J=1+\sigma+d.
\]
Then, almost everywhere in \(V\),
\begin{equation}\label{eq:minty-wirtinger}
 |g_z(S(x))|=\frac{\sqrt{\sigma^2-4d}}{J},\qquad
 g_{\bar z}(S(x))=\frac{d-1}{J}.
\end{equation}
For every nonnegative Borel function \(F\),
\begin{equation}\label{eq:minty-area}
 \int_V F(z)\,dz
 =\int_\Omega F(S(x))J(x)\,dx.
\end{equation}
Moreover, there exist constants \(c_\lambda,C_\lambda>0\)
such that
\begin{equation}\label{eq:minty-comparable}
 \sigma\le J\le C_\lambda \sigma,\qquad
 \frac{c_\lambda}{\sigma}\le\delta(S(x))
 \le\frac{C_\lambda}{\sigma}
 \quad\text{a.e. in }\Omega.
\end{equation}
\end{lemma}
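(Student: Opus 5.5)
The plan is to treat the lemma in three layers: the monotone-map structure of \(S\), a pointwise linear-algebra computation at points of twice differentiability, and a change-of-variables argument that uses the absence of a singular Hessian part from Theorem~\ref{thm:main}.

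\emph{Structure of \(S\).} Since \(\nabla u\) is monotone, for \(x,y\in\Omega\),
\[
 |S(x)-S(y)|^2=|x-y|^2+2\langle x-y,\nabla u(x)-\nabla u(y)\rangle+|\nabla u(x)-\nabla u(y)|^2\ge|x-y|^2 .
\]
Hence \(S\) is injective and \(S^{-1}\) is \(1\)-Lipschitz. By planar strict convexity and Caffarelli's theory, \(u\in C^1\), so \(S\) is continuous. Invariance of domain then makes \(V\) open and \(S\) a homeomorphism. For \(g\), set \(p=\nabla u(x)-\nabla u(y)\) and \(q=x-y\). Then
\[
 |p-q|^2=|p+q|^2-4\langle p,q\rangle\le|p+q|^2 .
\]
Conjugation is an isometry, so \(g\) is \(1\)-Lipschitz. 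Both \(S^{-1}\) and \(g\) are Lipschitz, so \(g\) is differentiable a.e. in \(V\).

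\emph{Pointwise formulas.} Fix \(x\) where \(u\) is twice differentiable (Alexandrov) and \(\nabla u\) is approximately differentiable with derivative \(H\); the set of such \(x\) has full measure. Since \(u\in W^{2,1}_{\mathrm{loc}}\) by Theorem~\ref{thm:main}, these two derivatives agree a.e. Rotate coordinates so that \(H=\diag(\mu_1,\mu_2)\). The real differential of \(S\) is \(I+H\), and that of \(w\mapsto\nabla u-w\) is \(H-I\). The chain rule gives \(D\bar g=(H-I)(I+H)^{-1}=\diag\bigl(\tfrac{\mu_1-1}{\mu_1+1},\tfrac{\mu_2-1}{\mu_2+1}\bigr)\). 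For a diagonal real matrix \(\diag(\alpha,\beta)\), the Wirtinger coefficients are \(f_z=(\alpha+\beta)/2\) and \(f_{\bar z}=(\alpha-\beta)/2\), and conjugating the map interchanges \(f_z\) with the conjugate of \(f_{\bar z}\). Using \((\mu_1+1)(\mu_2+1)=J\), we get
\[
 g_{\bar z}=\tfrac12\Bigl(\tfrac{\mu_1-1}{\mu_1+1}+\tfrac{\mu_2-1}{\mu_2+1}\Bigr)=\frac{d-1}{J},
\]
and \(|g_z|=|\mu_1-\mu_2|/J=\sqrt{\sigma^2-4d}/J\). The rotation only changes \(g_z\) by a unimodular factor, so the modulus is unaffected.

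To transfer these formulas to a.e. point of \(V\), I need that \(S\) maps null sets to null sets. This is where the absence of a singular part is used. The area formula for the Sobolev homeomorphism \(S\in W^{1,1}_{\mathrm{loc}}\) with \(J_S=J\) holds once \(S\) has the Lusin (N) property. I would verify (N) directly: for a null set \(N\), cover it by small balls \(B_i\) and estimate
\[
 |S(B_i)|\le|B_i|+|\partial u(B_i)|+2\,\bigl(\text{mixed term}\bigr),
\]
using Minkowski-type area bounds for \(B_i+\partial u(B_i)\). The mixed term is controlled by \(\int_{B_i}\Delta u\), which is small on a small set because \(\Delta u\in L^1\) has no singular part, and \(|\partial u(B_i)|\le\lambda^{-1}|B_i|\). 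Alternatively, I would approximate by the smooth solutions of Section~\ref{sec:mainproof}, for which the area formula is classical, and pass to the limit using \eqref{eq:limit-set-bound}. With (N) in hand, the area formula gives \eqref{eq:minty-area}. Together with the pointwise formulas this yields \eqref{eq:minty-wirtinger} a.e. in \(V\).

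\emph{Comparability.} From \(\lambda\le d\le\lambda^{-1}\) a.e. and \(\sigma\ge2\sqrt d\ge2\sqrt\lambda\), we get \(J\ge\sigma\) and \(J\le\sigma+1+\lambda^{-1}\le C_\lambda\sigma\). For \(\delta\), write
\[
 \delta=\frac{J-\sqrt{\sigma^2-4d}}{J}=\frac{J^2-\sigma^2+4d}{J\,(J+\sqrt{\sigma^2-4d})}.
\]
The numerator is \((1+d)^2+2\sigma(1+d)-\ldots+4d\), which is comparable to \(\sigma(1+d)\). The denominator is comparable to \(\sigma^2\). Therefore \(\delta\) is comparable to \(1/\sigma\), with constants depending on \(\lambda\); this is \eqref{eq:minty-comparable}.

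The main obstacle is the Lusin (N) step, that is, justifying the area formula for a merely \(W^{1,1}\) homeomorphism. This is the one place where Theorem~\ref{thm:main} (the absence of a singular part of \(D^2u\)) is genuinely needed.
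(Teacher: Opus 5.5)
Your treatment of the monotone structure of $S$, the chain-rule computation of $g_z,g_{\bar z}$ (diagonalizing $H$ and tracking the unimodular factor under rotation), and the comparison $\delta\asymp 1/\sigma$ by rationalizing are all correct and match the paper in substance. The paper writes $Dg(S(x))=\diag(1,-1)(H-I)(H+I)^{-1}$, and notes directly that $J\delta=1+d+\sigma-\sqrt{\sigma^2-4d}$ lies between $1+\lambda$ and $(1+\lambda^{-1/2})^2$. You also locate the crux correctly: the Lusin (N) property of $S$. It is needed both for \eqref{eq:minty-area} and to move the pointwise formulas from a.e.\ $x\in\Omega$ to a.e.\ $z\in V$.

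Your primary argument for (N), however, does not work as stated. From $S(B_i)\subset B_i+\nabla u(B_i)$ you want $|S(B_i)|\le|B_i|+|\nabla u(B_i)|+2(\text{mixed term})$. That is the mixed-volume identity for convex bodies, and $\nabla u(B_i)$ is not convex.
\begin{itemize}
\item For a nonconvex connected set $G$, the area of $B_r+G$ is not controlled by $|G|+\pi r^2+Cr\operatorname{diam}G$; a thin spiral is a counterexample. You give no reason why $\nabla u(B_i)$ should behave better.
\item Passing to $\operatorname{conv}\nabla u(B_i)$ replaces $|\nabla u(B_i)|\le\lambda^{-1}|B_i|$ by a term of size $(\operatorname{osc}_{B_i}\nabla u)^2$. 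You have no way to sum that over a cover of a null set.
\item You claim the mixed term, which is of size $r_i\operatorname{osc}_{B_i}\nabla u$, is bounded by $\int_{B_i}\Delta u$. This is unproved, and on the same ball it is false for general convex $u$. At best one gets an enlarged ball plus a covering argument, which you do not supply.
\end{itemize}

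Your alternative, via smooth approximation, is the right direction. What it lacks is the idea that makes the limit passage work: $S=\nabla(u+q)$ with $q=|x|^2/2$, so $|S(E)|=\mu_{u+q}(E)$ is itself a Monge--Amp\`ere measure. This is exactly the paper's argument.
\begin{itemize}
\item For smooth convex $u_j\to u$ one has $\det D^2(u_j+q)=1+\Delta u_j+\det D^2u_j$.
\item Weak continuity of Monge--Amp\`ere measures, together with distributional convergence of $\Delta u_j$, gives $\mu_{u+q}=dx+\Delta u+\mu_u$ as measures.
\item Theorem~\ref{thm:main} gives $\Delta u=\sigma\,dx$, hence $\mu_{u+q}=J\,dx$.
\end{itemize}
This identity is simultaneously \eqref{eq:minty-area} and the (N) property, so the general area formula for $W^{1,1}$ homeomorphisms is not needed at all.

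If you prefer to keep your route, you must at least justify $|S(O)|\le\liminf_j|S_j(O)|$ for open $O$. This follows from degree theory using $S_j\to S$ locally uniformly, or equivalently from lower semicontinuity of $\mu_{u_j+q}$ on open sets. Combine it with \eqref{eq:set-bound} and $\det D^2u_j\le\lambda^{-1}$, after localizing to normalized sections. You then get $|S(O)|\le C(|O|+|O|^{1-1/P})$, hence (N), after which the Sobolev area formula and your pointwise computation finish the proof.
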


\begin{proof}
The planar regularity theory and Theorem~\ref{thm:main} imply that
\[
 u\in C^1_{\mathrm{loc}}(\Omega)
 \cap W^{2,1}_{\mathrm{loc}}(\Omega).
\]
The Alexandrov equation implies \(\lambda\le d\le\lambda^{-1}\)
almost everywhere. Convexity implies
\[
 (x-y)\cdot(\nabla u(x)-\nabla u(y))\geq0.
\]
Consequently,
\[
 |S(x)-S(y)|^2\geq|x-y|^2,\qquad
 |g(S(x))-g(S(y))|^2\leq|S(x)-S(y)|^2.
\]
It follows that \(S\) is injective, \(S^{-1}\) is
\(1\)-Lipschitz on \(V\), and \(g\) is \(1\)-Lipschitz.
Brouwer's invariance of domain theorem shows that \(V\) is open and \(S\) is
a homeomorphism onto \(V\).

Let \(q(x)=|x|^2/2\). For smooth convex \(u_j\), the
two-dimensional determinant identity reads
\[
 \det D^2(u_j+q)=1+\Delta u_j+\det D^2u_j.
\]
Approximate \(u\) locally by smooth convex functions.
Weak continuity of Alexandrov measures and distributional
convergence of the Laplacians imply the measure identity
\[
 \mu_{u+q}=(1+\sigma+d)\,dx=J\,dx.
\]
Since \(u+q\) is a \(C^1\) convex function and
\(\nabla(u+q)=S\), its Alexandrov measure satisfies
\[
 \mu_{u+q}(E)=|S(E)|
\]
for every Borel set \(E\Subset\Omega\), and hence for
arbitrary Borel sets by exhaustion. This proves
\eqref{eq:minty-area} for indicators and then for nonnegative
Borel functions.

At a differentiability point of \(\nabla u\), the chain rule
and the locally Lipschitz inverse of \(S\) yield
\[
 Dg(S(x))=
 \begin{pmatrix}1&0\\0&-1\end{pmatrix}\mathcal R,
 \qquad
 \mathcal R=(H-I)(H+I)^{-1}.
\]
The matrix \(\mathcal R\) is symmetric. Thus the Wirtinger derivatives
are
\[
 g_z=\frac{\mathcal R_{11}-\mathcal R_{22}}{2}-i\mathcal R_{12},\qquad
 g_{\bar z}=\frac{\tr\mathcal R}{2}.
\]
If \(\kappa_1\ge\kappa_2>0\) are the eigenvalues of \(H\),
the eigenvalues of \(\mathcal R\) are
\((\kappa_i-1)/(\kappa_i+1)\).
Since \(\det(H+I)=J\), their difference and sum yield
\eqref{eq:minty-wirtinger}. Since \(S\) maps null sets to null sets, these identities also hold
almost everywhere in the \(z\)-coordinates.

Finally \(\sigma\ge2\sqrt d\ge2\sqrt\lambda\), so
\(\sigma\le J\le C_\lambda \sigma\), and
\[
 \delta(S(x))
 =\frac{1+d+\sigma-\sqrt{\sigma^2-4d}}{J}.
\]
Because
\(0\le \sigma-\sqrt{\sigma^2-4d}\le2\sqrt d\) and
\(\lambda\le d\le\lambda^{-1}\), the numerator lies
between \(1+\lambda\) and
\((1+\lambda^{-1/2})^2\). This proves
\eqref{eq:minty-comparable}, including \(\delta>0\) almost
everywhere.
\end{proof}

\begin{proof}[Proof of Corollary~\ref{cor:minty}]
Fix an open set \(V'\Subset V\) and set
\(K=S^{-1}(\overline{V'})\Subset\Omega\).
By Theorem~\ref{thm:main},
\[
 |\{x\in K:\sigma(x)>R\}|\leq C_KR^{-P}
 \qquad(R>0).
\]
Since \(P>1\), the layer-cake formula yields
\begin{equation}\label{eq:minty-layercake}
\begin{aligned}
 \int_{K\cap\{\sigma>R\}}\sigma\,dx
 &=R|\{x\in K:\sigma(x)>R\}|\\
   &\quad+\int_R^\infty|\{x\in K:\sigma(x)>t\}|\,dt\\
 &\le \frac{P}{P-1}C_KR^{1-P}.
\end{aligned}
\end{equation}
If \(\delta(S(x))<t\), then
\eqref{eq:minty-comparable} implies \(\sigma(x)>c_\lambda/t\).
Using \eqref{eq:minty-area}, \eqref{eq:minty-comparable},
and \eqref{eq:minty-layercake}, we obtain
\[
 |\{z\in V':\delta(z)<t\}|
 \le C_\lambda\int_{K\cap\{\sigma>c_\lambda/t\}}\sigma\,dx
 \le C\,t^{P-1}.
\]
This proves the asserted small-value estimate. Substituting \(t=R^{-1}\) and using \(|V'|<\infty\) for bounded
\(R\) yields \(\delta^{-1}\in L^{P-1,\infty}(V')\).

For sharpness, let \(u\) be the example in
Theorem~\ref{thm:sharp}, with \(Q=(0,1)^2\).
Here \(S:\R^2\to\R^2\) is a global homeomorphism:
it agrees with \(2x\) outside \(Q\), so it is proper.
Choose an open \(V'\Subset\R^2\) containing
\(S(\overline Q)\). From the upper bound in
\eqref{eq:minty-comparable},
\(\sigma>C_\lambda/t\) implies \(\delta(S(x))<t\).
Consequently, for all small \(t>0\),
\begin{align*}
 |\{z\in V':\delta(z)<t\}|
 &\ge\int_{Q\cap\{\sigma>C_\lambda/t\}}J\,dx\\
 &\ge\frac{C_\lambda}{t}
        |\{x\in Q:\sigma(x)>C_\lambda/t\}|\\
 &\ge c\,t^{P-1},
\end{align*}
where the last inequality follows from
\eqref{eq:two-sided}. This proves \eqref{eq:minty-sharp}.
Moreover,
\(\bigl|\{z\in V':\delta^{-1}(z)>R\}\bigr|
 \asymp R^{-(P-1)}\) for all sufficiently large \(R\).
For any finite \(b>0\), the tail contribution to the
\(b\)-th power of the \(L^{P-1,b}\) quasi-norm contains
\[
 \int_{R_0}^\infty
 \left[R^{P-1}|\{\delta^{-1}>R\}\cap V'|\right]^{b/(P-1)}
 \frac{dR}{R}=\infty.
\]
This proves the Lorentz sharpness.
\end{proof}

We then prove Corollary~\ref{cor:bilip} by applying
Theorem~\ref{thm:main} to \(u\) and its Legendre transform \(u^*\), followed by a two-sided maximal-function restriction.

\begin{lemma}\label{lem:gradient-inverse}
Under the assumptions in Corollary~\ref{cor:bilip}, set
\(T=\nabla u\), \(W=T(\Omega)\). Then
\(T:\Omega\to W\) is a homeomorphism, and its inverse is
\(T^{-1}=\nabla u^*\), where
\[
 u^*(y)=\sup_{x\in\Omega}\{x\cdot y-u(x)\},
 \qquad y\in W.
\]
In the local Alexandrov sense on \(W\),
\begin{equation}\label{eq:dual-ma}
 \mu_{u^*}=\frac{dy}{d\circ T^{-1}},
 \qquad \lambda\le d\circ T^{-1}\le\lambda^{-1},
 \qquad d=\det D^2u\quad\text{a.e.}
\end{equation}
Consequently,
\begin{equation}\label{eq:inverse-weak}
 DT\in L^{P,\infty}_{\mathrm{loc}}(\Omega),\qquad
 D(T^{-1})\in L^{P,\infty}_{\mathrm{loc}}(W),
\end{equation}
and, for all Borel \(E\subset\Omega\), \(F\subset W\),
\begin{equation}\label{eq:gradient-measure}
 \lambda|E|\le|T(E)|\le\lambda^{-1}|E|,
 \qquad \lambda|F| \le |T^{-1}(F)|\le\lambda^{-1}|F|.
\end{equation}
\end{lemma}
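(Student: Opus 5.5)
We prove the statements in the order they are listed, so that each one feeds the next.

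\emph{Homeomorphism and inverse.} By planar strict convexity and Caffarelli's interior $C^{1,\alpha}$ theory, $u\in C^1(\Omega)$ is strictly convex. Strict convexity gives
\[
(x-y)\cdot(\nabla u(x)-\nabla u(y))>0\qquad (x\neq y),
\]
so $T$ is injective. Since $T$ is continuous, invariance of domain shows that $W$ is open and that $T$ is a homeomorphism onto $W$.

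For $y=T(x)$, the supremum defining $u^*(y)$ is attained at $x$, by the supporting hyperplane inequality. The maximizer is unique, because any other maximizer $x'$ would satisfy $y\in\partial u(x')$, contradicting injectivity of $\partial u$ (strict convexity). Hence $\partial u^*(y)=\{x\}$, so $u^*$ is differentiable on $W$ with $\nabla u^*=T^{-1}$. Note that $u^*$ is convex, finite and continuous near each point of $W$.

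\emph{Dual measure.} For a Borel set $F\Subset W$, the subdifferential gives $\partial u^*(F)=T^{-1}(F)$. Therefore
\[
\mu_{u^*}(F)=|T^{-1}(F)|=\mu_u\bigl(T^{-1}(F)\bigr)^{\ast}
\]
is related to $\mu_u$ by push-forward: for Borel $E\Subset\Omega$,
\[
|T(E)|=\mu_u(E)=\int_E d\,dx,
\]
using that $\mu_u=f\,dx$ and $f=d$ a.e. (by Theorem~\ref{thm:main}, $u\in W^{2,1}_{\mathrm{loc}}$, so the absolutely continuous part of the Hessian determinant is $\det D^2u$). Together with $\lambda\le f\le\lambda^{-1}$, this gives the first half of \eqref{eq:gradient-measure}.

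The second half comes from the change of variables
\[
|T^{-1}(F)|=\int_{T^{-1}(F)}dx .
\]
Write $|F|=\int_{T^{-1}F}d\,dx$. Since $d\in[\lambda,\lambda^{-1}]$, we get
\[
\lambda|T^{-1}F|\le|F|\le\lambda^{-1}|T^{-1}F|.
\]
Applying the push-forward identity $T_\#(d\,dx)=dy$ to $\mathbf 1_{T^{-1}F}/d$ gives
\[
|T^{-1}F|=\int_F\frac{dy}{d\circ T^{-1}},
\]
which is \eqref{eq:dual-ma}. Hence $u^*$ is a convex Alexandrov solution on $W$ with
\[
\lambda\,dy\le\mu_{u^*}\le\lambda^{-1}dy.
\]

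\emph{Weak Lorentz bounds.} Since $W$ may not be convex, apply Theorem~\ref{thm:main} on convex neighborhoods. For each $y_0\in W$, pick a small ball $B\Subset W$ around $y_0$. Then $u^*|_B$ is convex on the convex open set $B$ and satisfies the measure bounds there, so
\[
D^2u^*\in L^{P,\infty}_{\mathrm{loc}}(B).
\]
Covering a compact subset of $W$ by finitely many such balls gives $D(T^{-1})=D^2u^*\in L^{P,\infty}_{\mathrm{loc}}(W)$. The bound $DT=D^2u\in L^{P,\infty}_{\mathrm{loc}}(\Omega)$ is Theorem~\ref{thm:main} directly.

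\emph{Main obstacle.} The delicate step is justifying the push-forward identity $T_\#(d\,dx)=\mathcal L^2|_W$ with the a.e. Hessian determinant, rather than with $f$ abstractly. This needs the fact that $\mu_u=\det D^2u\,dx$ for $W^{2,1}_{\mathrm{loc}}$ Alexandrov solutions with absolutely continuous measure. I would argue via the area formula for the $C^0\cap W^{1,1}$ map $T$: it is injective, and by the measure bounds it satisfies Lusin's (N) property. Alternatively, one can use Alexandrov's theorem together with Lebesgue differentiation of $\mu_u$ at points of second-order differentiability, where $\partial u(B_r(x))$ is asymptotic to $D^2u(x)B_r$.
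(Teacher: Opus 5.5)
Your proposal is correct and follows essentially the same route as the paper: injectivity plus invariance of domain, convex duality for $\nabla u^*=T^{-1}$, the identity $|T(E)|=\mu_u(E)=\int_E d\,dx$, the change of variables with $\mathbf 1_F/(d\circ T^{-1})$ to identify $\mu_{u^*}$, and Theorem~\ref{thm:main} applied to $u^*$ on balls $B\Subset W$. You are also more careful than the paper in justifying $f=\det D^2u$ a.e. One slip to fix: the displayed equality $|T^{-1}(F)|=\mu_u(T^{-1}(F))$ is false as written, since in fact $\mu_u(T^{-1}(F))=|F|$, but nothing downstream uses it.
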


\begin{proof}
The map \(T\) is continuous and injective. Brouwer's invariance of
domain theorem therefore shows that \(T\) is a homeomorphism onto \(W\). Convex duality shows that
\(u^*\) is finite and \(C^1\) on \(W\), with
\(\nabla u^*=T^{-1}\) there.
The Alexandrov measure identity and injectivity yield
\[
 |T(E)|=\mu_u(E)=\int_E d(x)\,dx.
\]
In particular, for every nonnegative Borel \(\varphi\),
\[
 \int_W\varphi(y)\,dy
 =\int_\Omega\varphi(T(x))d(x)\,dx.
\]
Applying this identity to
\(\varphi=\mathbf 1_F/(d\circ T^{-1})\) yields
\[
 |T^{-1}(F)|=\int_F\frac{dy}{d(T^{-1}(y))}.
\]
Since \(T^{-1}=\nabla u^*\), this is precisely the
Alexandrov identity \eqref{eq:dual-ma}. It follows that
\(\lambda\,dy\le\mu_{u^*}\le\lambda^{-1}\,dy\)
locally on \(W\). Applying Theorem~\ref{thm:main} to
\(u^*\) on balls compactly contained in \(W\) and to
\(u\) on \(\Omega\) proves \eqref{eq:inverse-weak}.
The two measure identities and the density bounds imply
\eqref{eq:gradient-measure}.
\end{proof}

\begin{lemma}
\label{lem:weak-lusin}
Let \(1<p<\infty\) and let
\(F:\R^2\to\R^2\) be continuous, compactly supported,
and in \(W^{1,1}(\R^2)\), with
\(|DF|\in L^{p,\infty}(\R^2)\). There is a full-measure
set \(\mathcal L_F\) such that, for all
\(x,y\in\mathcal L_F\),
\begin{equation}\label{eq:pointwise-sobolev}
 |F(x)-F(y)|\le C|x-y|
 \bigl(M|DF|(x)+M|DF|(y)\bigr),
\end{equation}
where \(M\) is the Hardy--Littlewood maximal operator.
For every \(L>0\), the restriction of \(F\) to
\[
 G_L(F)=\bigl\{x\in\mathcal L_F:
          M|DF|(x)\le L/(2C)\bigr\}
\]
is \(L\)-Lipschitz and
\begin{equation}\label{eq:weak-lusin-tail}
 |\R^2\setminus G_L(F)|
 \le C_p\|DF\|_{L^{p,\infty}}^pL^{-p}.
\end{equation}
\end{lemma}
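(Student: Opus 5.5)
The approach is the classical Bojarski--Haj\l asz pointwise inequality for Sobolev functions, combined with the weak-type boundedness of the maximal operator on Lorentz spaces. We treat each component of \(F\) separately, so it suffices to consider a scalar \(F\in W^{1,1}(\R^2)\cap C_c(\R^2)\). Let \(\mathcal L_F\) be the set of Lebesgue points of \(F\) (with respect to its precise representative). Since \(F\) is continuous, this representative coincides with \(F\) everywhere, and \(\mathcal L_F\) has full measure; in fact every point qualifies. For \(x\) and \(r>0\), the Poincar\'e-type estimate
\[
 |F(x)-F_{B_r(x)}|\le C\sum_{k\ge0}2^{-k}r\fint_{B_{2^{-k}r}(x)}|DF|\le C\,r\,M|DF|(x)
\]
follows from telescoping over dyadic balls and applying the \(L^1\) Poincar\'e inequality on each ball. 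This step uses only \(W^{1,1}\) and the Lebesgue point property.

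Next, fix \(x,y\) and set \(r=|x-y|\). We compare \(F_{B_r(x)}\) and \(F_{B_r(y)}\) with \(F_{B_{2r}(x)}\). Since \(B_r(y)\subset B_{2r}(x)\) and the volumes are comparable, Poincar\'e on \(B_{2r}(x)\) gives
\[
 |F_{B_r(y)}-F_{B_{2r}(x)}|+|F_{B_r(x)}-F_{B_{2r}(x)}|\le Cr\fint_{B_{2r}(x)}|DF|\le Cr\,M|DF|(x).
\]
Combining this with the previous bound at both points yields \eqref{eq:pointwise-sobolev}. On \(G_L(F)\), the right side of \eqref{eq:pointwise-sobolev} is at most \(C|x-y|\cdot L/C\cdot\)(a harmless factor), so the Lipschitz bound \(L\) follows once the constant in the definition of \(G_L\) is matched with the constant in \eqref{eq:pointwise-sobolev}. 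If componentwise constants accumulate, we enlarge \(C\) accordingly.

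The tail estimate \eqref{eq:weak-lusin-tail} is the main point. Since \(p>1\), \(M\) is bounded on \(L^{p,\infty}(\R^2)\). This follows by Marcinkiewicz interpolation between the weak \((1,1)\) bound and the \(L^\infty\) bound, applied in the Lorentz-space form. Alternatively, one can argue directly: split \(|DF|=f\mathbf 1_{\{f>\beta s\}}+f\mathbf 1_{\{f\le \beta s\}}\). The second piece has maximal function at most \(\beta s\). The first piece lies in \(L^1\) with norm at most \(C_p\|f\|_{L^{p,\infty}}^p(\beta s)^{1-p}\), by the layer-cake formula and \(p>1\). Applying weak \((1,1)\) to it gives \(|\{Mf>2\beta s\}|\le C_p\|f\|^p_{L^{p,\infty}}s^{-p}\).

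Taking \(s\) proportional to \(L/(2C)\) then gives \(|\{M|DF|>L/(2C)\}|\le C_p\|DF\|^p_{L^{p,\infty}}L^{-p}\). Since \(\mathcal L_F\) has full measure, \eqref{eq:weak-lusin-tail} follows. The only delicate step is this truncation computation, in particular checking that \(\int_{\{f>\sigma\}}f\le \frac{p}{p-1}\|f\|^p_{L^{p,\infty}}\sigma^{1-p}\). This is exactly the layer-cake computation already used in \eqref{eq:minty-layercake}.
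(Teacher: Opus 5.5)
Your proposal is correct and follows essentially the same route as the paper. It uses dyadic telescoping with the $L^1$ Poincar\'e inequality; continuity makes every point a Lebesgue point. It compares $F_{B_r(x)}$ and $F_{B_r(y)}$ through the containing ball $B_{2r}(x)$. It then splits $|DF|$ at a level proportional to $L$ and applies the weak $(1,1)$ maximal inequality together with the layer-cake bound $\int_{\{f>\sigma\}}f\le\frac{p}{p-1}\|f\|_{L^{p,\infty}}^p\sigma^{1-p}$.

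Your hedge about a ``harmless factor'' is unnecessary. On $G_L(F)$ the right side of \eqref{eq:pointwise-sobolev} is at most $C|x-y|\cdot 2\cdot L/(2C)=L|x-y|$, which is exactly the Lipschitz bound.
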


\begin{proof}
At Lebesgue points, telescope the averages of \(F\) over
\(B_{2^{-j}r}(x)\) and apply the \(L^1\) Poincar\'e
inequality to obtain
\[
 |F(x)-F_{B_r(x)}|\le Cr\,M|DF|(x).
\]
For \(r=|x-y|\), comparing the averages over
\(B_r(x)\) and \(B_r(y)\) through a containing ball
proves \eqref{eq:pointwise-sobolev}; see also
\cite{BojarskiHajlasz1993}. Continuity identifies the
pointwise values with the chosen representative.

Split \(|DF|=|DF|\mathbf1_{\{|DF|\le t/2\}}+
 |DF|\mathbf1_{\{|DF|>t/2\}}\). Since the maximal function
of the first summand does not exceed \(t/2\), the weak
\((1,1)\) maximal inequality and the distribution bound for
\(|DF|\) imply
\[
 |\{M|DF|>t\}|
 \le\frac{C}{t}\int_{\{|DF|>t/2\}}|DF|
 \le C_p\|DF\|_{L^{p,\infty}}^pt^{-p}.
\]
Here \(p>1\) ensures convergence of the tail integral.
Choosing \(t=L/(2C)\) proves
\eqref{eq:weak-lusin-tail}, and
\eqref{eq:pointwise-sobolev} yields the Lipschitz restriction.
\end{proof}

\begin{proof}[Proof of Corollary~\ref{cor:bilip}]
Fix \(K\Subset\Omega\), and choose
\(\chi\in C_c^\infty(\Omega)\) and
\(\eta\in C_c^\infty(W)\) equal to \(1\) on
neighborhoods of \(K\) and \(T(K)\), respectively.
Extend
\[
 F=\chi T,\qquad G=\eta T^{-1}
\]
by zero to \(\R^2\). Both are continuous,
compactly supported Sobolev maps; by
Lemma~\ref{lem:gradient-inverse} and the product rule,
\(DF,DG\in L^{P,\infty}(\R^2)\).
Applying Lemma~\ref{lem:weak-lusin} to each of them
(with \(p=P\)), we obtain
\[
 |K\setminus G_L(F)|+|T(K)\setminus G_L(G)|\le CL^{-P}.
\]
Define the common good set
\[
 E_L^0=K\cap G_L(F)\cap T^{-1}\bigl(G_L(G)\bigr).
\]
By \eqref{eq:gradient-measure},
\[
 |K\setminus E_L^0|
 \le |K\setminus G_L(F)|
 +\lambda^{-1}|T(K)\setminus G_L(G)|
 \le CL^{-P}.
\]
For \(x,y\in E_L^0\), \(T=F\) is \(L\)-Lipschitz,
while \(T^{-1}=G\) is \(L\)-Lipschitz on
\(T(E_L^0)\). Hence
\[
 L^{-1}|x-y|\le|T(x)-T(y)|\le L|x-y|.
\]
By inner regularity choose a compact
\(E_L\subset E_L^0\) with
\(|E_L^0\setminus E_L|\le L^{-P}\).
The distance inequalities persist, and injectivity and
\eqref{eq:gradient-measure} imply
\[
 |T(K)\setminus T(E_L)|
 =|T(K\setminus E_L)|
 \le\lambda^{-1}|K\setminus E_L|.
\]
This proves \eqref{eq:bilip-exception} and
\eqref{eq:bilip-on-good}.

For sharpness, take the example in
Theorem~\ref{thm:sharp}. Choose a compact \(K\Subset Q\) whose
interior contains the compact null set outside which \(u\) is smooth.
The nested marked sets in the construction eventually lie
inside \(K\); hence \(u\) coincides with one fixed smooth
stage outside \(K\), and \(D^2u\) is bounded on \(Q\setminus K\).
Therefore the lower bound in \eqref{eq:two-sided} holds
with \(K\) in place of \(Q\), at all sufficiently large levels.

Suppose \(E\subset K\) is measurable and
\(\operatorname{Lip}(T|_E)\le L\). Kirszbraun's extension
theorem extends \(T|_E\) to an \(L\)-Lipschitz map on
\(\R^2\). Locality of weak derivatives on the set where
this extension agrees with \(T\) yields
\(\|DT\|_{\mathrm{op}}\le L\) almost everywhere on \(E\).
Since \(DT=D^2u\) is positive semidefinite,
\(\Delta u\le2L\) almost everywhere on \(E\). Thus
\[
 |K\setminus E|
 \ge|\{x\in K:\Delta u(x)>2L\}|
 \ge cL^{-P},
\]
which is \eqref{eq:bilip-sharp}.
The same obstruction holds in the target area, since
\(|T(K\setminus E)|\ge\lambda|K\setminus E|\).
\end{proof}

Corollary~\ref{cor:bilip} is closely related to a question of interest
in optimal transport: how regular a transport map is on a set carrying most of the mass.
Suppose \(\Omega_0,\Omega_1\subset\R^2\) are bounded open
convex domains, and let \(\mu_i=\rho_i\,dx\) be probability
measures with \(0<a\le\rho_i\le b\), \(a<b\).
The Brenier map \(T=\nabla u\) from \(\mu_0\) to
\(\mu_1\) has an interior homeomorphic representative by
the planar regularity theory for convex transport domains.
Its Alexandrov density satisfies
\[
 \det D^2u=\frac{\rho_0}{\rho_1\circ T},\qquad
 \frac ab\le\det D^2u\le\frac ba.
\]
Apply Corollary~\ref{cor:bilip} with \(\lambda=a/b\).
Since \(\rho_0\le b\), for every compact
\(K\Subset\Omega_0\) and all sufficiently large \(L\)
there is a compact \(E_L\subset K\) for which
\[
 \mu_0(K\setminus E_L)\le CL^{-(a+b)/(b-a)}.
\]
The pushforward relation and injectivity yield
\[
 \mu_1(T(K)\setminus T(E_L))
 =\mu_0(K\setminus E_L),
\]
proving \eqref{eq:transport-mass}. In particular, for each fixed
compact \(K\Subset\Omega_0\) and all sufficiently small
\(\varepsilon>0\), one can remove at most \(\varepsilon\) of the
source mass in \(K\), together with the corresponding target mass
in \(T(K)\), so that the restriction of \(T\) to the retained set
is bi-Lipschitz with constant at most
\(C_K\varepsilon^{-(b-a)/(a+b)}\).

To see sharpness within the transport class, use the example
of Theorem~\ref{thm:sharp}. It agrees with \(|x|^2/2\)
outside \(Q\); hence \(T=\nabla u\) is a proper
homeomorphism of \(\R^2\), agrees with the identity on
\(\R^2\setminus Q\), and satisfies \(T(Q)=Q\).
Write \(d=\det D^2u\) almost everywhere on \(Q\), and set
\[
 Z=\int_Q\sqrt{d(x)}\,dx,\qquad
 \rho_0(x)=\frac{\sqrt{d(x)}}{Z},\qquad
 \rho_1(y)=\frac{1}{Z\sqrt{d(T^{-1}(y))}}.
\]
By the change-of-variables identity, both densities have unit integral
and satisfy
\[
 \frac{\sqrt\lambda}{Z}\le\rho_i\le
 \frac{1}{Z\sqrt\lambda},\qquad i=0,1.
\]
Moreover,
\[
 T_\#(\rho_0\,dx)=\rho_1\,dy.
\]
Thus the common ratio of lower to upper density bounds
is exactly \(\lambda\), and \(T\) is the quadratic-cost
Brenier map for these measures. Choose the compact set
\(K\Subset Q\) in the sharpness proof above. For every
measurable \(E\subset K\) with
\(\operatorname{Lip}(T|_E)\le L\),
\[
 \mu_0(K\setminus E)
 \ge\frac{\sqrt\lambda}{Z}|K\setminus E|
 \ge cL^{-P},\qquad
 \mu_1(T(K)\setminus T(E))=\mu_0(K\setminus E).
\]
Hence the transport exceptional-mass exponent is optimal.

\section*{Use of artificial intelligence}
During the development and preparation of this work, the authors
used GPT for mathematical exploration. Through exchanges with the
authors, GPT generated drafts of the proofs and the counterexample
construction. These outputs were treated as proposals requiring
verification, rather than as authoritative evidence of correctness.
The authors checked the mathematical arguments, revised and
reorganized the proofs, and supplied missing details. The authors
take full responsibility for the correctness, originality, and
integrity of the final manuscript, including its mathematical
statements, proofs, computations, references, and the appropriate
attribution of prior work.



\begingroup
\small
\raggedright
\bibliographystyle{amsplain}
\bibliography{references}
\endgroup
\printauthorinformation

\end{document}